\documentclass[a4paper,11pt]{article}
\usepackage{latexsym}
\usepackage{amssymb}
\usepackage{enumerate}
\usepackage{lscape}
\usepackage[shortlabels]{enumitem}
\usepackage{xcolor}
\usepackage{commath}
\usepackage{comment}
\usepackage{amsfonts}
\usepackage{amsmath,amsthm}
\usepackage{hyperref}
\usepackage{algorithm}
\usepackage{algorithmic}
\usepackage{relsize}
\usepackage[title]{appendix}
\usepackage{framed}
\usepackage{boites}
\usepackage[pdftex]{graphicx}
%%%%%%%%%%
%\usepackage{romannum}
%%%%%%%%%%%
\newtheorem{mainthm}{Theorem}

\newtheorem{thm}{Theorem}

 \textheight=21.5cm \textwidth=15cm
 \topmargin=-0.8cm
 \oddsidemargin=0.3cm \evensidemargin=0.3cm

\newenvironment{@abssec}[1]{%
    \if@twocolumn

      \section*{#1}%
    \else

      \vspace{.05in}\footnotesize
      \parindent .2in
 {\upshape\bfseries #1. }\ignorespaces
    \fi}

    {\if@twocolumn\else\par\vspace{.1in}\fi}

\newenvironment{keywords}{\begin{@abssec}{\keywordsname}}{\end{@abssec}}

\newenvironment{AMS}{\begin{@abssec}{\AMSname}}{\end{@abssec}}

\newcommand\keywordsname{Key words}
\newcommand\AMSname{AMS subject classifications}
\newcommand\AMname{AMS subject classification}
\newcommand\restr[2]{{% we make the whole thing an ordinary symbol
\left.\kern-\nulldelimiterspace % automatically resize the bar with \right
#1 % the function
\vphantom{|} % pretend it's a little taller at normal size
\right|_{#2} % this is the delimiter
}}
\newtheorem{theorem}{Theorem}[section]
\newtheorem{lemma}[theorem]{Lemma}
\newtheorem{corollary}[theorem]{Corollary}

\newtheorem{remark}[theorem]{Remark}
\newtheorem{definition}[theorem]{Definition}

\makeatletter
\def\moverlay{\mathpalette\mov@rlay}
\def\mov@rlay#1#2{\leavevmode\vtop{%
   \baselineskip\z@skip \lineskiplimit-\maxdimen
   \ialign{\hfil$\m@th#1##$\hfil\cr#2\crcr}}}
\newcommand{\charfusion}[3][\mathord]{
    #1{\ifx#1\mathop\vphantom{#2}\fi
        \mathpalette\mov@rlay{#2\cr#3}
      }
    \ifx#1\mathop\expandafter\displaylimits\fi}
\makeatother

\newcommand{\cupdot}{\charfusion[\mathbin]{\cup}{\cdot}}

\def\XXint#1#2#3{{\setbox0=\hbox{$#1{#2#3}{\int}$}
\vcenter{\hbox{$#2#3$}}\kern-.5\wd0}}

\newcommand{\link}{\mathop{\circ\kern-.35em -}}

\newcommand{\pa}{\partial}

\newcommand{\dv}{\mathop{\mathrm{div}}}

\newcommand{\al}{\alpha}
\newcommand{\be}{\beta}

\newcommand{\si}{\sigma}
\newcommand{\Si}{\Sigma}

\newcommand{\Om}{\Omega}

\newcommand\setbld[2]{\left\{ #1 \;\middle |\; #2\right\}}

\newcommand{\cdottone}{{\boldsymbol{\cdot}}}

\newcommand{\identmatrix}%{I_N}
{{\mathbf{I}}}

\numberwithin{equation}{section}

\title{Elliptic and parabolic overdetermined problems in multi-phase settings}

\author{Lorenzo Cavallina \, Giorgio Poggesi \
%\thanks{}
}
%\thanks{} \footnotemark[2]

\date{}

\begin{document}

\maketitle

\begin{abstract}
The present paper provides symmetry results for a class of overdetermined problems of elliptic and parabolic type in multi-phase settings, including various extensions of remarkable results obtained by S. Sakaguchi in \cite{sakaguchi2016rendiconti, sakaguchibessatsu}. A new alternative approach to proving this type of results is presented, leveraging the weak formulation of the problem. The resulting proofs are direct and elegant, and bring several benefits, including extensions to multi-phase settings (possibly with infinitely many phases) and generalizations to rough interfaces. 
\end{abstract}

\begin{keywords}
overdetermined problem, symmetry, concentric balls, elliptic problem, parabolic problem, Serrin's problem, two-phase, multi-phase, weak formulation, Cauchy--Kovalevskaya theorem
\end{keywords}
\begin{AMS}
35N25, 35J15, 35K05, 35K10, 35Q93
\end{AMS}

\maketitle

\pagestyle{plain}
\thispagestyle{plain}

\section{Introduction}\label{sec:introduction}
The present paper provides symmetry results for a class of overdetermined problems of elliptic and parabolic type in multi-phase settings.
The foundational result concerning overdetermined problems was famously obtained by J. Serrin in 1971 in the seminal paper \cite{serrin1971} and dealt with problems of elliptic type in the one-phase setting. Its simplest formulation states that, given a $C^2$ bounded domain $\Omega \subset \mathbb{R}^N$, the {\it torsion problem}
\begin{equation*}
    \begin{cases}
-\Delta u = 1\quad \text{in }\Omega,\\
u=0\quad \text{on }\partial\Omega,\\
    \end{cases}
\end{equation*}
coupled with the overdetermined condition
\begin{equation}\label{eq:Serrin over}
    \partial_\nu u= c \quad \text{on }\partial\Omega ,
\end{equation}
where $\partial_\nu$ denotes the normal derivative and $c$ is a constant, admits a solution $u \in C^2(\overline{\Omega})$ if and only if $\Omega$ is a ball.

The {\it two-phase setting} essentially amounts to replacing the classical Laplacian with an operator with discontinuous coefficients; more precisely, given a bounded domain $\Omega\subset \mathbb{R}^N$, an open set $D$ such that $\overline{D} \subset \Omega$, and a piecewise constant function
\begin{equation*}
    \si(x) =
    \begin{cases}
        \si_c>0 \quad \text{if } x\in D , \\
        \si_s>0 \quad \text{if } x\in \Om \setminus D ,
    \end{cases}
\end{equation*}
the {\it two-phase torsion problem} is defined as
\begin{equation}\label{eq:2 phase pb}
    \begin{cases}
-\dv(\sigma\nabla u)=1\quad \text{in }\Omega,\\
u=0\quad \text{on }\partial\Omega .
    \end{cases}
\end{equation}
In this context, the natural question is whether \eqref{eq:2 phase pb} coupled with the Serrin-type overdetermination \eqref{eq:Serrin over} may lead to a symmetry result, as it happens in the classical (i.e., one-phase) setting. Such a question has a negative answer, and, as shown in \cite{cavallina&yachimura2020two}, there exist infinitely many non-symmetric solutions to \eqref{eq:2 phase pb}--\eqref{eq:Serrin over}. A rough but intuitive explanation in this case, we are dealing with only $1$ overdetermination (namely, \eqref{eq:Serrin over}), which is strictly less than the $2$ degrees of freedom brought by $D$ and $\Omega$ (both $D$ and $\Omega$ are unknowns). Hence, a natural question is ``how much overdetermination is enough to get symmetry in two-phase problems?'' A comprehensive answer and a full characterization of both symmetry and asymmetry results in a multi-phase setting has been recently obtained by the authors in \cite{face2phase}.
On another direction, one may wonder whether reducing the degrees of freedom brought by $D$ and $\Omega$ may lead to symmetry with only $1$ Serrin-type overdetermination. For instance, one may set either $D$ or $\Om$ to be a ball. In the first case (i.e., $D$ is a ball), the answer is negative and, in fact, as shown in \cite{cavallina&yachimura2021symmetry}, even when $D$ is a ball, there exist infinitely many non-symmetric solutions to \eqref{eq:2 phase pb}--\eqref{eq:Serrin over}. In the second case (i.e., $\Omega$ is a ball), an affirmative answer was obtained by Sakaguchi in \cite{sakaguchi2016rendiconti, sakaguchibessatsu} and reads as follows.
\begin{thm}[{\cite[Theorem 5.1]{sakaguchibessatsu}}]\label{thm:Sakaguchi}
Let $\Omega$ be a ball and let $\emptyset \neq D \subset \subset \Om$ be the disjoint union of finitely many domains of class $C^2$. Assume that $\Omega \setminus D$ is connected. Then, the overdetermined problem \eqref{eq:2 phase pb}--\eqref{eq:Serrin over} admits a solution if and only if $(D,\Om)$ are concentric balls.
\end{thm}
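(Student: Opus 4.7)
Write $\Omega = B_R(x_0)$ and introduce the radial one-phase torsion function
\[
v(x) := \frac{R^2 - |x-x_0|^2}{2N\sigma_s},
\]
the classical solution of $-\sigma_s\Delta v = 1$ in $\Omega$, $v = 0$ on $\partial\Omega$. The plan is (i) to identify $u \equiv v$ on $\Omega\setminus D$ via a unique continuation argument anchored on $\partial\Omega$, and then (ii) to exploit the resulting Dirichlet/Neumann traces on $\partial D$ to force each component of $D$ to be a sphere centered at $x_0$.

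For (i), testing the weak formulation $\int_\Omega \sigma\nabla u\cdot\nabla\phi = \int_\Omega\phi$ with $\phi\equiv 1$ (equivalently, applying the divergence theorem in each phase and using the built-in transmission condition $\sigma_s\partial_\nu u|^+ = \sigma_c\partial_\nu u|^-$ to cancel interface contributions) yields $|\Omega| = -\sigma_s c\,|\partial\Omega|$, hence $c = -R/(N\sigma_s) = \partial_\nu v|_{\partial\Omega}$. Consequently $w := u - v$ is harmonic in the connected set $\Omega\setminus D$ and satisfies $w = \partial_\nu w = 0$ on the analytic surface $\partial\Omega$. Since $\overline D \subset\subset \Omega$, interior regularity makes $u$ (hence $w$) real-analytic in a neighborhood of $\partial\Omega$, and Cauchy--Kovalevskaya/Holmgren forces $w\equiv 0$ there; analytic continuation through the connected set $\Omega\setminus D$ then gives $u \equiv v$ throughout $\Omega\setminus D$.

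For (ii), fix a connected component $D_i$ of $D$. The trace $u|_{\partial D_i} = v|_{\partial D_i}$ and the outer Neumann trace $\partial_\nu u|^{+}_{\partial D_i} = -(x-x_0)\cdot\nu/(N\sigma_s)$ are now explicit, and the transmission condition prescribes $\partial_\nu u|^{-}_{\partial D_i} = -(x-x_0)\cdot\nu/(N\sigma_c)$. The radial auxiliary $V_c(x) := -|x-x_0|^2/(2N\sigma_c)$ solves $-\sigma_c\Delta V_c = 1$ and, by direct computation, shares exactly this normal derivative on $\partial D_i$; thus $u - V_c$ is harmonic in $D_i$ with vanishing Neumann data on $\partial D_i$, and Green's identity (i.e., the weak formulation with $u-V_c$ itself as test function) forces $u - V_c \equiv C_i$ in $D_i$ for some constant $C_i$. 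Matching Dirichlet values on $\partial D_i$ then gives, for every $x\in\partial D_i$,
\[
\frac{R^2 - |x-x_0|^2}{2N\sigma_s} = C_i - \frac{|x-x_0|^2}{2N\sigma_c},
\]
which rearranges to $|x-x_0|^2 \cdot \tfrac{\sigma_c-\sigma_s}{2N\sigma_s\sigma_c} = \tfrac{R^2}{2N\sigma_s} - C_i$. Since $\sigma_s \neq \sigma_c$, the left-hand side must be constant, so $|x-x_0|$ is constant on $\partial D_i$; being a closed $C^2$-hypersurface contained in the connected sphere $\partial B_{r_i}(x_0)$, $\partial D_i$ must coincide with that sphere, whence $D_i = B_{r_i}(x_0)$. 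Disjointness of the components of $D$ allows only one such ball, so $D$ and $\Omega$ are concentric balls.

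The step I expect to be most delicate is the unique continuation in (i): one has to invoke interior elliptic regularity (justified by $\overline D \subset\subset \Omega$) to promote the weak $H^1$ solution $u$ to a classical, indeed real-analytic, solution in a neighborhood of $\partial\Omega$, so that the Cauchy problem is posed on the analytic sphere $\partial\Omega$ rather than on the (merely $C^2$) interface $\partial D$. Once this is in place, the remaining arguments are direct weak-formulation computations, which explains why the approach should extend to rougher interfaces and to multi-phase (and even infinite-phase) configurations.
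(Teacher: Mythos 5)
Your argument is correct, and its first half --- determining $c$ by the divergence theorem and then propagating $u\equiv v$ through the connected shell $\Omega\setminus\overline{D}$ via Holmgren/Cauchy--Kovalevskaya and analytic continuation --- coincides with Lemma~\ref{lem radial function} and the opening of Step~1 in the paper's proof of the more general Theorem~\ref{mainthm I}. Where you genuinely diverge is in the treatment of the core. You argue component by component on $\partial D_i$, invoking the pointwise transmission condition $\sigma_s\,\partial_\nu u^{+}=\sigma_c\,\partial_\nu u^{-}$ and the explicit interior comparison function $V_c$ to make $u-V_c$ a constant in $D_i$, whence $|x-x_0|$ is constant on $\partial D_i$; this is clean and avoids Sakaguchi's Hopf-lemma case analysis, but it spends the $C^2$ hypothesis on $\partial D$ twice (once for the validity of the pointwise transmission conditions, once for the Green identity $\int_{D_i}|\nabla(u-V_c)|^2=\int_{\partial D_i}(u-V_c)\,\partial_\nu(u-V_c)$). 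The paper never touches the interface: subtracting the weak formulations of $u$ and of the radial $q$ and testing with $\widetilde\sigma u-q$, where $\widetilde\sigma$ is the smooth extension of $\restr{\sigma}{D}$ from Lemma~\ref{lem extension}, gives $\int_D|\nabla(\widetilde\sigma u-q)|^2=0$ in one stroke, with no regularity on $\partial D$ and with possibly infinitely many phases, after which the geometry of $D$ is recovered from the radiality of $\sigma$ by a topological argument. So your route proves Theorem~\ref{thm:Sakaguchi} but nothing more, whereas the weak-formulation trick is precisely what yields the rough-interface and multi-phase extensions of Theorem~\ref{mainthm I} --- a limitation you correctly anticipate in your closing remark, except that the obstruction is not only the unique continuation step (which survives roughening of $D$) but above all your reliance on classical interface conditions. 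Two minor points: $\varphi\equiv 1$ is not admissible in $H_0^1(\Omega)$, so the determination of $c$ must go through the per-phase divergence theorem as in your parenthetical (and as in Lemma~\ref{lem radial function}); and the converse implication, that concentric balls admit a solution, should be recorded, though it is immediate by solving the radial ODE.
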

\begin{remark}
    The same conclusion also holds in the case where the overdetermination $\partial_\nu u = c$ is assumed just on a connected component $\Sigma$ of $\partial \Omega$ and $\Sigma$ is a sphere. This is not stated in \cite{sakaguchibessatsu}, but it is essentially shown in \cite{sakaguchi2016rendiconti}.
\end{remark}
We recall that when $D$ is of class $C^2$ (as in the above result), \eqref{eq:2 phase pb} can be rewritten as a {\it transmission problem}. Sakaguchi's proof consists of a careful and detailed case-by-case analysis, where the {\it transmission conditions} are cleverly used to get a contradiction to Hopf's lemma.

In the present paper, we provide an alternative approach to this type of results. Our proofs are more direct and leverage the weak formulation of the problem. This brings several benefits. First, it allows us to extend the result to general multi-phase settings, possibly including infinitely many phases. Second, it allows us to generalize the result to the case of rough interfaces (i.e., with no regularity assumptions on $D$).
We finally mention that for simplicity we stated Theorem \ref{thm:Sakaguchi} for the equation $-\dv(\sigma\nabla u)=1$, but Sakaguchi's approach allows to treat equations of the type $-\dv(\sigma\nabla u)= \be-\al u$, where $\al \ge 0$ and $\be>0$ are real constants (see \cite[Theorem 5.1]{sakaguchibessatsu}); our approach instead allows to work with equations of the type $-\dv(\sigma\nabla u)= g$, where $g\in L^2(\Omega)$ is a positive radial function.

\subsection{The main result concerning an elliptic overdetermined problem}

In what follows, we will explain the setting and the notation employed. 
\begin{equation}\label{Omega}
  \text{Let $\Omega$ denote a bounded domain in $\mathbb{R}^N$ ($N\ge 2$).
}  
\end{equation}
\begin{equation}\label{partial Omega}
\text{Let $\partial\Omega$ be made entirely of regular points for the Dirichlet Laplacian}   
\end{equation}
(concerning the precise definition of ``regular point", we refer the interested reader to \cite[Chapter 8]{GilbargTrudinger}, where the well-known Wiener criterion is also discussed). Also, 
\begin{equation}\label{Gamma}
\text{let $\Gamma\subset\Omega$ be a closed set with Lebesgue measure zero.}    
\end{equation}
Let $\sigma_m$ and $\sigma_M$ be two positive real constants such that $0<\sigma_m<1<\sigma_M$ and
\begin{equation}
\text{let }\sigma: \Omega\setminus \Gamma\to [\sigma_m,\sigma_M] \text{ be a locally constant function}.
\end{equation}
We recall that a function is ``locally constant" if and only if it is constant on each connected component of its domain. 
Throughout this paper, let us assume that 
\begin{equation}\label{S}
\text{there exists a unique connected component $S$ of $\Omega\setminus\Gamma$ such that $\partial\Omega\subset\overline{S}$}  
\end{equation}
and we will set 
\begin{equation}\label{D}
D:=(\Omega\setminus\Gamma)\setminus S.
\end{equation}
 We will refer to $S$ and $D$ as ``\emph{shell}" and ``\emph{core}" respectively. Notice that, by construction, the open set $\Omega\setminus\Gamma$ has at most countably many connected components (indeed, for each such connected component, one can find a distinct point with only rational coordinates). It follows that the function $\sigma$ can take at most countably many values. For simplicity, we will assume that 
 \begin{equation}\label{sigmas}
    S=\sigma^{-1}\left(\{1\}\right).
    \end{equation}
Moreover, we will let $\{\sigma_i\}_{i\in I}\subset\mathbb{R}\setminus\{1\}$ denote the collection of all the values that $\sigma$ takes in $D$, indexed by $I\subseteq\mathbb{N}$. 

We will write:
\begin{equation}\label{D_i}
    D=\bigcup_{i\in I} D_i, \quad D_i:=\sigma^{-1}\left(\{\sigma_i\}\right).
\end{equation}
Throughout this paper, we will assume that the collection of all phases in the core is discrete. In other words,  
\begin{equation}\label{discreteness}
    \text{for all } x\in\Omega,\; \text{there exists } \varepsilon>0\; \text{such that } 
    \left| 
    \setbld{i\in I}{\overline{D_i}\cap B_\varepsilon(x)\ne \emptyset}
    \right|
    \le 1 .
\end{equation}
See Figure \ref{fig: discreteness} for a geometrical interpretation of such an assumption.
Here and throughout the paper, we denote by $B_r (x)$ the ball of radius $r>0$ centered at $x \in \mathbb{R}^N$; the ball of radius $r>0$ centered at the origin will be simply denoted by $B_r$.
\begin{figure}
    \centering
    \includegraphics[width=0.95\linewidth]{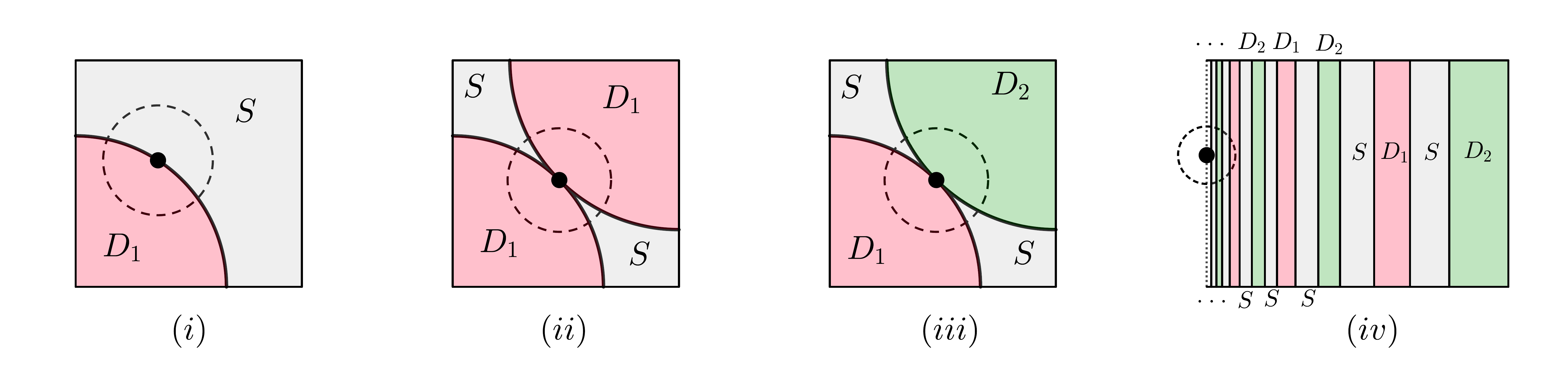}
    \caption{The geometrical interpretation of the discreteness assumption \eqref{discreteness}. Here examples $(i)$ and $(ii)$ satisfy \eqref{discreteness}, while examples $(iii)$ and $(iv)$ do not. Notice that in $(iii)$, two distinct phases of the core touch, thus violating \eqref{discreteness}. On the other hand, $(iv)$ still violates \eqref{discreteness} without two distinct phases of the core touching.} 
    \label{fig: discreteness}
\end{figure}

In some of the results, we will assume that 
\begin{equation}\label{Sigma}
\text{the boundary $\partial\Omega$ contains a connected component $\Sigma$, which is a sphere.}    
\end{equation}
Whenever we do so, without loss of generality, we will assume that the sphere is centered at the origin. Specifically, we denote it by
\begin{equation*}
    \Sigma=\partial B_R ,
\end{equation*}
where $B_R$ is the ball of radius $R>0$ centered at the origin.

\begin{figure}
    \centering
    \includegraphics[width=0.7\linewidth]{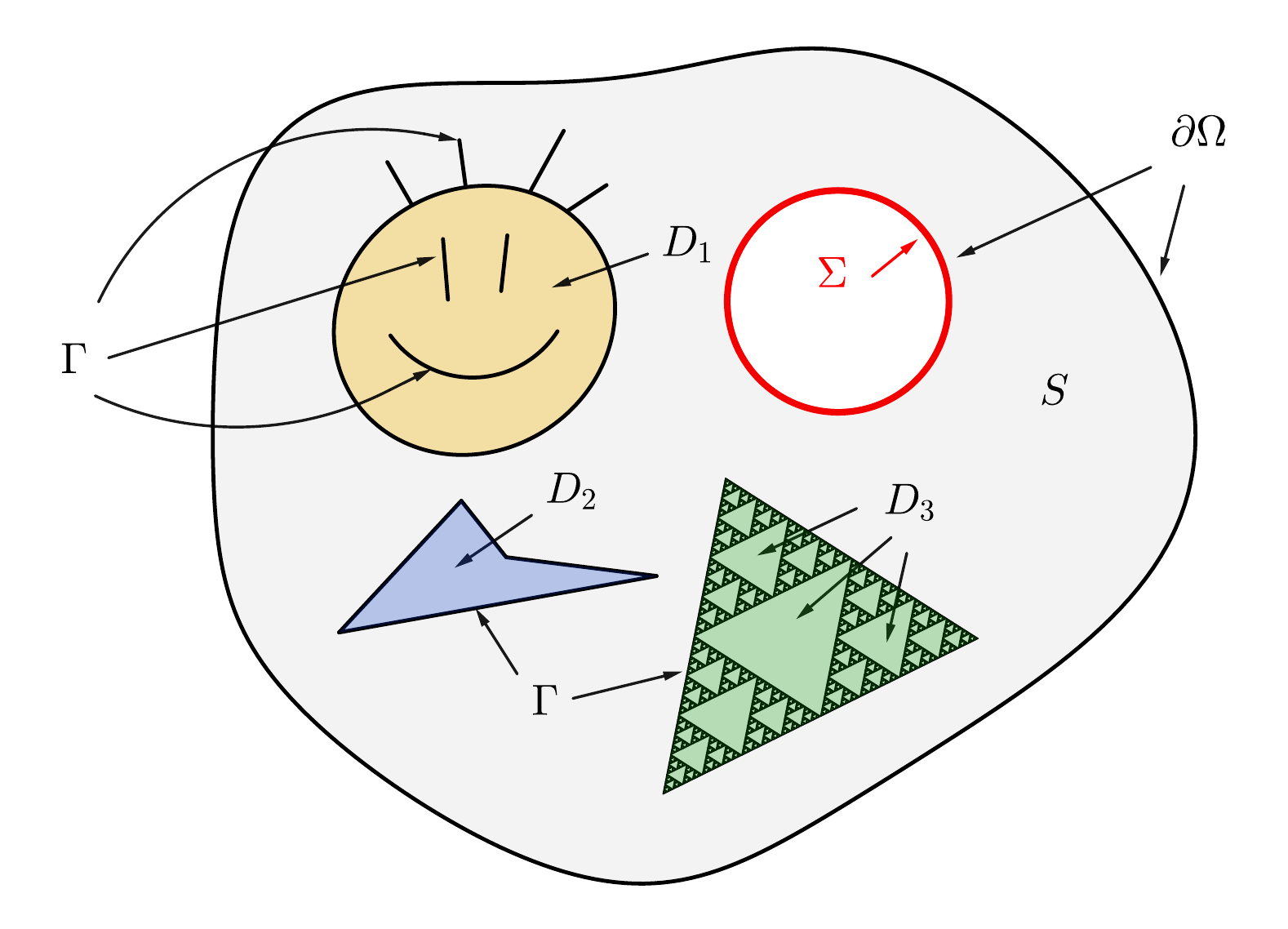}
    \caption{The geometric setting of our problem. Notice that we do not impose any regularity assumption on $\Gamma$, allowing even \emph{fractal interfaces}.}
    \label{fig: problem setting}
\end{figure}

Finally, 
\begin{equation}\label{g is radial}
\text{let 
$g\in L^2(\Omega)$ be a positive radial function}
\end{equation} (that is, $g$ admits a representative whose value at $x$ is positive and only depends on $|x|$)
such that
\begin{equation}\label{g is analytic}
\text{
its restriction $\restr{g}{S}$ is a real analytic function.} 
\end{equation}
 
We consider the following overdetermined problem:
\begin{equation}\label{odp}
    \begin{cases}
-\dv(\sigma\nabla u)=g\quad \text{in }\Omega,\\
u=0\quad \text{on }\partial\Omega,\\
\partial_\nu u= c\quad \text{on }\Sigma,
    \end{cases}
\end{equation}
where $\partial_\nu$ denotes the normal derivative, computed with respect to the outward unit normal $\nu$ and $c$ is a constant. 

In particular, we say that the overdetermined problem \eqref{odp} admits a solution if there exists a function $u\in H_0^1(\Omega)$ that satisfies both 
\begin{equation}\label{weak form}
    \int_\Omega  \sigma \nabla u \cdot \nabla \varphi = \int_\Omega g\varphi \quad \text{for all }\varphi\in H_0^1(\Omega),
\end{equation}
and 
\begin{equation}\label{overdetermined condition}
\partial_\nu u =c \quad \text{on }\Sigma.
\end{equation}
Notice that we can require the overdetermined condition \eqref{overdetermined condition} to hold pointwise on $\Sigma$ since the unique solution $u$ to \eqref{weak form} is arbitrarily smooth in a neighborhood of $\Sigma$ by the standard Schauder boundary estimates for the Poisson equation (see, for instance, \cite[Lemma 6.18]{GilbargTrudinger} and the subsequent discussion).

\begin{mainthm}\label{mainthm I}
In the setting given by \eqref{Omega}--\eqref{g is analytic}, the following are equivalent.
\begin{enumerate}[(i)]
    \item The overdetermined problem \eqref{odp} admits a solution $u\in H_0^1(\Omega)$.
    \item 
    \begin{itemize}
        \item Either $I=\emptyset$ and $\Omega$ is either a ball or a spherical annulus (one-phase), 
        \item or $I=\{i\}$ and $(\overline{D}, \overline{\Omega})=(\overline{D_i}, \overline{\Omega})$ are  concentric closed balls (two-phase).
    \end{itemize}
\end{enumerate}
\end{mainthm}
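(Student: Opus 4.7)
The implication (ii)~$\Rightarrow$~(i) is routine: integrate the radial ODE $(r^{N-1}U'(r))' = -r^{N-1}G(r)$, with $G(r) := g(x)$ for $|x|=r$, subject to the appropriate boundary/interface matching. I focus on (i)~$\Rightarrow$~(ii).

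\textbf{Step 1 (radial symmetry on $S$).} On $S$ the equation reads $-\Delta u = g$; real-analyticity of $g|_{S}$ forces $u$ to be real-analytic on $S$. In a one-sided neighborhood of $\Sigma = \partial B_{R}$ inside $S$, the radial function $\tilde u(x) := U(|x|)$ --- where $U$ solves the radial ODE with Cauchy data $U(R) = 0$, $U'(R) = c$ --- is a radial real-analytic solution of the same PDE with matching Cauchy data on $\Sigma$. Cauchy--Kovalevskaya (equivalently, Holmgren's uniqueness theorem) gives $u \equiv \tilde u$ near $\Sigma$, and analytic continuation propagates this identity throughout the connected open set $S$. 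The Dirichlet condition $u = 0$ on $\partial\Omega \subset \overline{S}$ then forces $U(|x|) = 0$ on $\partial\Omega$; as $U$ is analytic and non-constant ($G > 0$), its zero set on a bounded interval is finite, so $\partial\Omega$ lies in finitely many concentric spheres and, by connectedness, $\Omega$ is either a ball or a spherical annulus centered at the origin.

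\textbf{Step 2 (divergence-free comparison and Neumann uniqueness).} The unique radial $H^{1}_{0}(\Omega)$-solution $\tilde u$ of $-\Delta\tilde u = g$ coincides with $u$ on $S$ by Step~1 (the flux balance implicit in the weak formulation makes the Cauchy data compatible). Form
\begin{equation*}
    \vec G := \sigma\nabla u - \nabla\tilde u \in L^{2}(\Omega;\mathbb{R}^{N}).
\end{equation*}
Subtracting the two weak formulations gives $\int_{\Omega}\vec G \cdot \nabla\varphi = 0$ for every $\varphi \in C_{c}^{\infty}(\Omega)$: $\vec G$ is distributionally divergence-free. Moreover $\vec G \equiv 0$ on $S$. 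On each connected component $\mathcal C$ of $\{\sigma = \sigma_{i}\}$, $\vec G = \nabla h_{\mathcal C}$ with $h_{\mathcal C} := \sigma_{i}u - \tilde u$ harmonic. By discreteness \eqref{discreteness} we may localize the testing near a single $\mathcal C$; the identity $\int_{\Omega}\vec G \cdot \nabla\varphi = 0$ for \emph{every} $\varphi \in C_{c}^{\infty}(\Omega)$ (including those with nonzero trace on $\partial\mathcal C$) is precisely the weak form of the zero-Neumann condition $\partial_{\nu}h_{\mathcal C} = 0$ on $\partial\mathcal C$, since the $S$-side contribution vanishes. Neumann uniqueness yields $h_{\mathcal C} \equiv c_{\mathcal C}$ on $\mathcal C$, so $u = (\tilde u + c_{\mathcal C})/\sigma_{i}$ there. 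Matching the $H^{1}$-traces of $u$ across $\partial\mathcal C$ --- $\tilde u$ from the $S$-side and $(\tilde u + c_{\mathcal C})/\sigma_{i}$ from the $\mathcal C$-side --- then forces $(\sigma_{i}-1)\,U(|x|) = c_{\mathcal C}$ on $\partial\mathcal C$, and analyticity of $U$ confines $\partial\mathcal C$ to finitely many concentric spheres.

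\textbf{Step 3 (geometric conclusion and main obstacle).} Every component $\mathcal C$ of $D$ is therefore a concentric ball or spherical annulus. Discreteness prevents distinct phases from sharing any boundary sphere, and the requirement that the connected shell $S$ contain $\partial\Omega$ in its closure rules out annular components of $D$ as well as the annular case for $\Omega$ whenever $D \neq \emptyset$; only the configurations in (ii) survive. The principal technical obstacle is the Neumann uniqueness in Step~2 when $\partial\mathcal C$ is merely a closed set of measure zero: the boundary condition cannot be read pointwise and must be extracted purely through the global identity $\int_{\Omega}\vec G \cdot \nabla\varphi = 0$ tested against \emph{all} $\varphi \in C_{c}^{\infty}(\Omega)$, with $h_{\mathcal C}$ shown to be constant by an energy estimate based on suitable cutoff approximations. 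This is exactly where the weak-formulation viewpoint improves decisively over Sakaguchi's transmission-and-Hopf argument, which required $\partial D \in C^{2}$.
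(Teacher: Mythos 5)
Your high-level skeleton (a radial comparison function, the divergence-free difference field, an energy/cutoff argument on the core) is the same as the paper's, but several steps do not hold as stated, and the step you yourself flag as ``the principal technical obstacle'' is exactly the one you never carry out. (a) The per-component localization in Step 2 is unjustified: assumption \eqref{discreteness} separates the closures of \emph{distinct phases} $\overline{D_i}$, $\overline{D_j}$ ($i\ne j$), but it does not prevent two connected components of the \emph{same} phase from touching, so a cutoff isolating a single component $\mathcal{C}$ need not exist, and the ``other side'' of $\partial\mathcal{C}$ need not be $S$ (it can be another component of $D_i$, or $\Gamma$ with further core beyond it). The paper resolves this globally rather than per component: Lemma \ref{lem extension} uses \eqref{discreteness} (via an inductive construction of pairwise disjoint neighborhoods of the $\overline{D_i}$) to build a smooth $\widetilde\sigma$ agreeing with $\sigma$ on $D$, and then the \emph{single} test function $\varphi=\widetilde\sigma u-q\in H_0^1(\Omega)$ yields $0=\int_D|\nabla(\widetilde\sigma u-q)|^2$, i.e.\ $\sigma\nabla u=\nabla q$ on all of $D$ at once; this is the cutoff/energy argument you gesture at but do not perform. (b) The trace-matching endgame is flawed: $H^1$ functions have no traces on the rough measure-zero set $\partial\mathcal{C}$; interior continuity of $u$ (De Giorgi--Nash--Moser) could substitute, but only at points of $\partial\mathcal{C}\cap\overline{S}$, and analyticity of the radial profile $U$ at the corresponding radii is not guaranteed, since $g$ is only assumed analytic on $S$. (c) Your Step 3 assertion that connectedness of $S$ ``rules out annular components of $D$'' is false: take $\Omega=B_3$, $\Gamma=\partial B_1\cup\partial B_2$, $D_1=B_1\cup\{1<|x|<2\}$ (one phase, with an annular component), $S=\{2<|x|<3\}$; the overdetermined problem is solvable and conclusion (ii) holds, because the conclusion is about \emph{closures}: $\overline{D_1}=\overline{B_2}$ is a concentric closed ball. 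What must actually be proved is that each $\overline{D_i}$ is a closed concentric ball and $|I|\le 1$.

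The paper reaches that conclusion by a route your proposal is missing entirely. From $\sigma\nabla u=\nabla q$ in $D$ and $\nabla u=\nabla q$ in $\Omega\setminus\overline{D}$ it deduces $\nabla_\theta u=0$ a.e.\ in $\Omega$, hence, by the spherical-harmonics characterization of Lemma \ref{lem radial symmetry} together with interior continuity of $u$, that $u$ is radial in \emph{all} of $\Omega$; consequently $\sigma$ (determined inside each phase by $-\sigma\Delta u=g$) is radial too. The geometry is then finished by a separation/segment argument: around any $x\in D_i$ a full closed concentric annulus lies in $\overline{D_i}$; connectedness of $\overline{S}\supset\partial\Omega$ forces $\overline{S}$ outside it (this also excludes case ($ii$) of \eqref{two possibilities} when $I\neq\emptyset$); and \eqref{discreteness} forces the whole segment from $x$ to the origin into $\overline{D_i}$, whence $\overline{D_i}$ is a closed ball centered at the origin and $I$ is a singleton. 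Finally, a smaller gap of the same nature sits in your Step 1: ``$\partial\Omega$ lies in finitely many concentric spheres and $\Omega$ is connected'' does not imply that $\Omega$ is a ball or an annulus (consider a ball with a closed hemisphere of an interior sphere removed); the paper's Lemma \ref{lem: two cases} additionally uses $g>0$ and the maximum principle to get $u>0$ inside $\Omega$, which is what excludes such sets. Your implication (ii) $\Rightarrow$ (i) is fine.
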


\begin{remark}
Notice that we did not impose any regularity assumption on $D$, but only a discreteness-type assumption, namely \eqref{discreteness}, ensuring that no two distinct phases in the core ``touch" (see Figures \ref{fig: discreteness}--\ref{fig: problem setting}). We remark that \eqref{discreteness} is trivially satisfied in the two-phase setting, as in such a setting the core is made up of one phase only. Thus, in the two-phase setting, the symmetry result holds for any open set $D$ such that $\overline{D} \subset \Omega$ and $S= \Omega \setminus \overline{D}$ is connected.
We stress that the assumption that $S$ is connected, that is \eqref{S}, cannot be removed (see \cite[Section $6$]{cavallina2024symmetry&asymmetry} where counterexamples are shown for disconnected~$S$).
\end{remark}

\subsection{Applications to the parabolic setting}

Leveraging Theorem \ref{mainthm I}, we will also obtain symmetry results concerning multi-phase overdetermined parabolic problems. 

We consider the following initial/boundary-value problem:
\begin{equation}\label{parabolic_bvp}
    \begin{cases}
\partial_t u = \dv(\sigma\nabla u) \quad \text{in }\Omega \times (0,\infty),\\
u=1 \quad \text{in } \Omega \times \left\{ 0 \right\} , \\
u = 0\quad \text{on }\partial\Omega \times (0,\infty) ,\\
    \end{cases}
\end{equation}
where $u=u(x,t)$ and $\partial_t$ denotes the partial derivative with respect to $t$.

In the setting given by \eqref{Omega}--\eqref{discreteness}, let $G$ be a bounded domain such that
\begin{equation*}
    \overline{D} \subset G  , \quad \overline{G} \subset \Omega , \quad \text{and} \quad \mathrm{dist}(x,\overline{D}) \ge \mathrm{dist}(x,\partial \Omega)\quad \text{for all }x\in \partial G.
\end{equation*}
We consider one of the following overdetermined conditions:
\begin{equation}\label{par_over_isothermic}
    u(x,t) = a(t) \quad \text{on } M \times (0,\infty) ,
\end{equation}
\begin{equation}\label{par_over_constant flow G}
     \partial_\nu u (x,t) = b(t) \quad \text{on } M \times (0,\infty) , \end{equation}
where $M$ is a connected component of $\partial G$ and $a,b:(0,\infty)\to\mathbb{R}$ are real valued functions.
From a physical point of view, \eqref{par_over_isothermic} says that $M$ is an isothermic surface, while \eqref{par_over_constant flow G} says that $M$ is a surface with the constant flow property (see Section \ref{section Parabolic} for details). 
In the two-phase setting with smooth interfaces, symmetry results for \eqref{parabolic_bvp}--\eqref{par_over_isothermic} and \eqref{parabolic_bvp}--\eqref{par_over_constant flow G} were obtained in \cite{sakaguchi2016rendiconti, sakaguchibessatsu, CavallinaMagnaniniSakaguchi2021constantflowproperty}.
Here, we provide an alternative proof based on a corollary of Theorem \ref{mainthm I} (see Corollary \ref{cor_parabolic_1}), which allows to extend the results to the multi-phase setting with possibly infinitely many phases and rough interfaces.

\begin{mainthm}\label{thm:parabolic}
In the setting given by \eqref{Omega}--\eqref{discreteness}, the following are equivalent.
\begin{enumerate}[(i)]
    \item The weak solution $u$ of \eqref{parabolic_bvp} (see Definition~\ref{definition}) satisfies \eqref{par_over_isothermic}.
    \item The weak solution $u$ of \eqref{parabolic_bvp} (see Definition~\ref{definition}) satisfies \eqref{par_over_constant flow G} and $M$ is of class~$C^2$. 
 \item 
    \begin{itemize}
        \item Either $I=\emptyset$ and $\Omega$ is either a ball or a spherical annulus (one-phase), 
        \item or $I=\{i\}$ and $(\overline{D}, \overline{\Omega})=(\overline{D_i}, \overline{\Omega})$ are  concentric closed balls (two-phase).
    \end{itemize}
\end{enumerate}
\end{mainthm}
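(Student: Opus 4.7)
The plan is to derive Theorem \ref{thm:parabolic} from the elliptic result of Theorem \ref{mainthm I}, passing between the parabolic and elliptic regimes via a time-integration argument that is formalised in Corollary \ref{cor_parabolic_1}.

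I would handle the direction $(iii) \Rightarrow (i),(ii)$ first. Under (iii), the coefficient $\sigma$, the domain $\Omega$, and the initial datum $u \equiv 1$ are all invariant under rotations about the common center of $D$ and $\Omega$. By uniqueness of the weak solution to \eqref{parabolic_bvp}, $u(\cdot,t)$ inherits this radial symmetry for every $t>0$. The geometric constraints on $G$ are compatible with choosing $M$ to be a concentric sphere contained in the shell, so both $u(\cdot,t)$ and $\partial_\nu u(\cdot,t)$ are automatically constant on $M$, and $M$ is of class $C^\infty$.

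For $(i) \Rightarrow (iii)$, I would reduce to the elliptic setting via the time integral
\[
v(x) := \int_0^\infty u(x,t)\, dt,
\]
which is well defined because $u$ decays exponentially, at a rate controlled by the first Dirichlet eigenvalue of $-\dv(\sigma\nabla \cdot\,)$ on $\Omega$. Integrating \eqref{parabolic_bvp} in $t$ shows that $v$ solves $-\dv(\sigma \nabla v) = 1$ in $\Omega$ with $v = 0$ on $\partial \Omega$, and the isothermic condition \eqref{par_over_isothermic} translates into $v \equiv A := \int_0^\infty a(t)\,dt$ on $M$, constant in $x$. The assumption $\dist(x,\overline{D}) \geq \dist(x,\partial\Omega)$ for $x \in \partial G$ guarantees that $M$ lies inside the shell $S$, where $\sigma \equiv 1$ and the equation for $v$ is real analytic. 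The role of Corollary \ref{cor_parabolic_1} is then to convert this interior Dirichlet-type overdetermination into the Serrin-type boundary overdetermination required by Theorem \ref{mainthm I}, via analytic continuation of $v$ from $M$ toward $\partial \Omega$ through the one-phase annular region; a direct application of Theorem \ref{mainthm I} then yields (iii).

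For $(ii) \Rightarrow (iii)$, the same time integral supplies $\partial_\nu v \equiv \int_0^\infty b(t)\, dt$ on $M$, a Neumann-type interior overdetermination; the assumed $C^2$-regularity of $M$ makes this normal derivative classical. Corollary \ref{cor_parabolic_1} should then apply in an entirely analogous way. The hardest step I anticipate will be the rigorous formulation of Corollary \ref{cor_parabolic_1}, namely turning an interior overdetermination on a possibly only-$C^2$ surface $M$ into a Serrin-type overdetermination on a spherical component of $\partial \Omega$ while fully respecting the rough, multi-phase nature of $\partial D = \bigcup_{i \in I} \partial D_i$. This is precisely where the weak-formulation approach of Theorem \ref{mainthm I} should pay off, since all of the analytic continuation needed to transfer data from $M$ to $\partial \Omega$ can be carried out inside the shell $S$ without invoking any classical transmission condition across the interfaces of the core.
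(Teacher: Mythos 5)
There is a genuine gap in your treatment of the implications $(i)\Rightarrow(iii)$ and $(ii)\Rightarrow(iii)$: you integrate in time \emph{first} and then try to extract symmetry from the single condition $v\equiv A$ on $M$ (respectively $\partial_\nu v\equiv B$ on $M$). This destroys essentially all of the information carried by \eqref{par_over_isothermic}. That condition is an infinite family of constraints, one for each $t>0$, asserting that $M$ is a level surface of $u(\cdot,t)$ simultaneously for all times; the integrated condition merely says that $M$ lies in one level set of the stationary function $v$, which is no overdetermination at all. Indeed, already in the one-phase case $I=\emptyset$ (where the distance condition on $\partial G$ is vacuous), the torsion function of an \emph{arbitrary} bounded domain $\Omega$ has level surfaces $M=\partial G$ with $\overline{G}\subset\Omega$, so your reduced statement would be satisfied by every $\Omega$, contradicting the claimed equivalence with $(iii)$. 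Moreover, a single piece of Dirichlet (or Neumann) data on an interior hypersurface does not determine $v$ near $M$, so there is no ``analytic continuation of $v$ from $M$ toward $\partial\Omega$'' available; and Corollary \ref{cor_parabolic_1} does not perform any such conversion --- it only treats the Serrin-type condition \eqref{parabolic_Serrin-type condition} on a spherical component of $\partial\Omega$, by integrating in time and invoking Theorem \ref{mainthm I}.

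The missing idea is that the passage from the interior overdetermination on $M$ to a boundary overdetermination on $\partial\Omega$ must be carried out at the \emph{parabolic} level, before any time integration. This is what the paper does: following Sakaguchi (and, for $(ii)$, Cavallina--Magnanini--Sakaguchi), one exploits the short-time asymptotics of the heat flow --- the hypothesis $\dist(x,\overline{D})\ge\dist(x,\partial\Omega)$ on $\partial G$ ensures that for small times the behaviour of $u$ near $M$ is governed by the one-phase equation and by $\dist(\cdot,\partial\Omega)$ --- together with the real analyticity of $u(\cdot,t)$ in the shell, to conclude that $u$ is radial in $\overline{\Omega}\setminus D$ about some center. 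Only then does one land in the two cases of \eqref{two possibilities}, obtain a spherical component $\Sigma\subset\partial\Omega$ on which $\partial_\nu u=c(t)$, and apply Corollary \ref{cor_parabolic_1}, which is where the time integration and Theorem \ref{mainthm I} finally enter. Your direction $(iii)\Rightarrow(i),(ii)$ is fine.
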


\subsection{Organization of the paper}

The paper is organized as follows. Section \ref{section Preliminaries} contains some preliminary results that will be crucial for the proof of Theorem \ref{mainthm I}, which is contained in Section \ref{section Proof Thm I}. Section \ref{section Parabolic} is devoted to symmetry results for multi-phase overdetermined problems of parabolic type (Corollary \ref{cor_parabolic_1} and Theorem \ref{thm:parabolic}), which will be obtained leveraging Theorem \ref{mainthm I}.

\section{Some preliminary results}\label{section Preliminaries}

In this section, we will show some technical lemmas that will be crucial to show Theorem~\ref{mainthm I}.
Throughout this section, we will work in the setting given by \eqref{Omega}--\eqref{g is analytic}.

The following result relates the solvability of \eqref{odp} with the rotational symmetry of~$\Omega$.

\begin{lemma}\label{lem: two cases}
Let the assumptions \eqref{Omega}--\eqref{S}, \eqref{g is radial}--\eqref{g is analytic} be in force. If the overdetermined problem \eqref{odp} admits a solution, then one of the following alternatives holds:
\begin{equation}\label{two possibilities}
(i)\;\; \text{$\Omega$ is a ball $\{|x|<R\}$}, \quad 
\text{or}
\quad (ii)\;\;\text{$\Omega$ is an annulus $\{R_1<|x|<R_2\}$.}    \end{equation}
\end{lemma}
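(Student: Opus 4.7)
The plan is to show that on the shell $S$ the solution $u$ must be radially symmetric about the origin, and then to deduce the shape of $\Omega$ from the Dirichlet condition $u=0$ on $\partial\Omega$ combined with an ODE analysis of the zeros of the radial profile. Two inputs make this viable: by \eqref{sigmas}, $\sigma\equiv 1$ on $S$, so $u$ satisfies $-\Delta u = g$ there, and by \eqref{g is analytic}, the right-hand side is real-analytic on $S$; moreover, $\Sigma$ is an analytic hypersurface on which $u=0$ and $\partial_\nu u=c$.

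First, I would note that $u$ is real-analytic on $S$ by Morrey--Nirenberg, and extends real-analytically across $\Sigma$ from the $S$-side by classical boundary analyticity results (since both the hypersurface and its Dirichlet data are analytic). Next, I would invoke the Cauchy--Kovalevskaya theorem for the Cauchy problem $-\Delta v = g$ with $v=0$, $\partial_\nu v = c$ on $\Sigma$: the sphere is non-characteristic for the Laplacian and all data are analytic, so there exists a unique analytic solution $v$ in a one-sided neighborhood of $\Sigma$. Because $g$ is radial and the Cauchy data are rotationally invariant, Cauchy--Kovalevskaya uniqueness forces $v\circ Q = v$ for every rotation $Q$ fixing the origin; hence $v(x)=U(|x|)$ for an analytic $U$ solving the radial ODE
\begin{equation*}
-\bigl(r^{N-1}U'(r)\bigr)' = r^{N-1}\tilde g(r), \qquad U(R)=0,\quad U'(R)=\pm c,
\end{equation*}
where $\tilde g(|x|)=g(x)$ and the sign depends on whether $\Sigma$ is an outer or inner component of $\partial\Omega$. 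Comparing analytic solutions of the same Cauchy problem, I would conclude $u(x)=U(|x|)$ in a one-sided collar of $\Sigma$ inside $S$.

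The crucial final step is to propagate this identity throughout $S$ and then harvest the Dirichlet condition. The ODE extends $U$ real-analytically to all of $(0,\infty)$, and the function $h(x):=u(x)-U(|x|)$ is real-analytic on $S\setminus\{0\}$ and vanishes on an open subset (the collar of $\Sigma$); by the identity theorem it vanishes on the connected component of $S\setminus\{0\}$ containing that collar, and a short argument handles the origin separately if $0\in S$, yielding $u(x)=U(|x|)$ on all of $S$. Since $u=0$ on $\partial\Omega\subset\overline S$, continuity of $U$ forces $U(|x_0|)=0$ for every $x_0\in\partial\Omega$. Integrating the ODE once shows that $r\mapsto r^{N-1}U'(r)$ is strictly decreasing (as $\tilde g>0$), so $U$ has at most two zeros $r_1<r_2$, whence $\partial\Omega\subset\{|x|=r_1\}\cup\{|x|=r_2\}$. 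A standard clopen/connectedness argument, using that $\Omega$ is a bounded domain with $\Omega\subset B_{r_2}$, finally yields \eqref{two possibilities}. I expect the main obstacle to be the propagation of the identity $u(x)=U(|x|)$ from the collar of $\Sigma$ to the rest of $S$, because $S$ is not assumed to be rotationally symmetric and the analytic-continuation argument on the connected set $S$ must carefully accommodate the potential radial singularity at $r=0$.
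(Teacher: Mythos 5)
Your argument follows the paper's proof in its essentials: the Cauchy--Kovalevskaya theorem applied on $\Sigma$ forces $u$ to be radial in a collar of $\Sigma$, interior analyticity plus the identity theorem propagate the identity $u(x)=U(|x|)$ to all of $\overline{S}\supset\partial\Omega$, and the Dirichlet condition then pins down $\partial\Omega$. The only (harmless) divergence is in the final step: the paper invokes the maximum principle ($g>0$ gives $u>0$ in $\Omega$, so $\partial\Omega$ is the zero level set of the radial profile), whereas you count the zeros of $U$ via the strict monotonicity of $r\mapsto r^{N-1}U'(r)$; both close the argument correctly.
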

\begin{proof}
We first notice that, if the overdetermined problem \eqref{odp} admits a solution $u\in H_0^1(\Omega)$, then $\Omega$ must be radially symmetric. Indeed, by the Cauchy--Kovalevskaya theorem \cite[Subsection 4.6.3]{Evans2010PartialDifferentialEquations}, the solution $u$ is radial in a neighborhood of $\Sigma$. Then, since $u$ is real analytic in the open connected component $S$ (see \cite[Theorem 9.5.1 and Definition 9.3.2 of Chapter IX]{hormander2003TheAnalysisofLinearPartialDifferentialOperatorsI}), the function $u$, which is continuous up to the boundary by \eqref{partial Omega}, must be radial up to the closure $\overline{S}\supset\partial\Omega$. Finally, since $g>0$ by \eqref{g is radial}, $u$ never vanishes inside $\Omega$ by the maximum principle, whence $\partial \Omega$ is the $0$-level set of $u$. The desired conclusion follows.
\end{proof}

In the following lemma, under the assumption that \eqref{odp} is solvable, we provide the existence of a radially symmetric auxiliary function that satisfies a related one-phase overdetermined problem.

\begin{lemma}\label{lem radial function}
Let the assumptions \eqref{Omega}--\eqref{g is analytic} be in force. If the overdetermined \eqref{odp} admits a solution $u\in H_0^1(\Omega)$, then there exists a radial function $q$ satisfying the following:
\begin{equation*}
  -\Delta q=g\quad \text{in }\Omega,\quad q=u=0 \quad \text{on }\partial\Omega, \quad \partial_\nu q= \partial_\nu u=c \quad \text{on }\partial\Omega.  
\end{equation*}
\end{lemma}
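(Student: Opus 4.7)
Lemma \ref{lem: two cases} reduces everything to the case where $\Omega$ is either a ball $B_R$ or a spherical annulus $\{R_1<\abs{x}<R_2\}$, and by (the proof of) that same lemma $u$ is radial on $\overline{S}$. I would begin by introducing $q$ as the unique weak solution of the one-phase Poisson problem $-\Delta q = g$ in $\Omega$ with $q = 0$ on $\partial\Omega$. Since $\Omega$ and $g$ are both rotationally symmetric, composing $q$ with any rotation gives another solution of the same problem, and uniqueness immediately forces $q$ to be radial. This settles the first three conclusions in the statement, so the whole lemma reduces to showing $\partial_\nu q = \partial_\nu u$ on $\partial\Omega$.

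For this identity I would split into the ball and the annulus cases. In the ball case, $\partial\Omega$ is a single sphere on which both $\partial_\nu u = c$ and $\partial_\nu q$ are constant (the former by the radial symmetry of $u$ on $\overline{S}$). Applying the divergence theorem to both $-\dv(\sigma\nabla u)=g$ and $-\Delta q = g$, and using that $\sigma\equiv 1$ on $\partial\Omega\subset\overline{S}$ by \eqref{sigmas}, I get
\begin{equation*}
-c\,\abs{\partial B_R} \;=\; \int_\Omega g\,dx \;=\; -\partial_\nu q\,\abs{\partial B_R},
\end{equation*}
which forces $\partial_\nu q = c = \partial_\nu u$ on $\partial\Omega$.

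In the annulus case the divergence theorem yields only a single relation between the two unknown normal derivatives and so does not suffice on its own. Instead, I would exploit that $u$ satisfies $-\Delta u = g$ pointwise in $S$ (because $\sigma\equiv 1$ on $S$) and that $u|_S$ is radial: writing $u(x)=U(\abs{x})$ on $S$, the profile $U$ satisfies the ODE $-U''-\tfrac{N-1}{r}U'=g(r)$ on the radial projection $R(S):=\{\abs{x}:x\in S\}$. I would then verify $R(S)=(R_1,R_2)$ as follows: the set $R(S)$ is the continuous image of the connected open set $S$ under $x\mapsto\abs{x}$, hence a connected open subinterval of $(R_1,R_2)$; and since $\partial B_{R_1}\cup\partial B_{R_2}\subset\overline{S}$, both $R_1$ and $R_2$ lie in $\overline{R(S)}$, forcing equality. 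Consequently $U(r)$ and the radial profile of $q$ solve the same linear second-order ODE on $(R_1,R_2)$ with identical Dirichlet data $0$ at both endpoints, so they coincide, and the radial derivatives agree at $R_1$ and $R_2$, giving $\partial_\nu u=\partial_\nu q$ on all of $\partial\Omega$.

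The main obstacle I anticipate is precisely this annulus step: the shell $S$ is not assumed rotationally invariant, only $u|_S$ is (via the Cauchy--Kovalevskaya step used in the proof of Lemma \ref{lem: two cases}). What makes the ODE-uniqueness comparison go through is the topological content built into assumption \eqref{S} --- $S$ connected and $\partial\Omega\subset\overline{S}$ --- which is exactly what forces the radial projection of $S$ to fill the entire interval $(R_1,R_2)$.
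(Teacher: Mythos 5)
Your proposal is correct and follows essentially the same route as the paper: the same ball/annulus dichotomy from Lemma \ref{lem: two cases}, the same divergence-theorem computation for the ball, and the same key observation that $\partial\Omega\subset\overline{S}$ with $S$ connected forces the radial projection of $S$ to fill $(R_1,R_2)$ so that the profile of $u$ satisfies the radial ODE on the whole interval. The only (cosmetic) difference is in the annulus case: the paper defines $q$ by radially extending $u|_{\overline{S}}$ and then verifies $-\Delta q=g$ via the ODE, whereas you define $q$ as the solution of the one-phase Dirichlet problem and recover $q\equiv u$ on $\overline{S}$ from uniqueness of the two-point boundary value problem for the same ODE; both hinge on identical facts.
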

\begin{proof}
We will show that such a function $q$ exists in either case of \eqref{two possibilities}. 
First, assume case ($i$) of \eqref{two possibilities}. Consider the unique solution $q$ to 
\begin{equation*}
 -\Delta q = g \quad \text{in }B_R, \quad q=0\quad \text{on }\partial B_R.   
\end{equation*}
By unique solvability, $q$ is radial in $B_R$. In particular, $\partial_\nu q \equiv c_1$ on $\partial B_R$ for some real $c_1$. Moreover, by the divergence theorem, we get
\begin{equation*}
c_1|\partial B_R| = \int_{\partial B_R}\partial_\nu q = \int_{B_R} \Delta q = -\int_{B_R} g = \int_{B_R}\dv(\sigma \nabla u) =  \int_{\partial B_R} \partial_\nu u = c|\partial B_R|,
\end{equation*}
whence $c_1=c$. This concludes the proof in case ($i$) of \eqref{two possibilities}.

Let us now consider case ($ii$) of \eqref{two possibilities}. First of all, notice that $u$ is radial in $\overline{S}$ by construction. In other words, there exists a function $U$ such that
\begin{equation*}
   u(x)=U(|x|) \quad \text{for all } x\in \overline{S}.  
\end{equation*}
In particular, since $\Delta u=g$ in $S$, we get
\begin{equation}\label{U eq}
    U''(|x|)+\frac{N-1}{|x|}U'(|x|)= -g(x) 
\end{equation}
for all $x\in S$.
Consider now the following set:
\begin{equation*}
    J:=\setbld{|x|}{x\in\overline{S}}.
\end{equation*}
Notice that $J$ is connected, as it is the image of a connected set via a continuous function. Moreover, since $\partial\Omega\subset\overline{S}$ by assumption, we have $J=[R_1,R_2]$. In particular, we have that $\Omega=S$ and hence \eqref{U eq} holds for all $x\in \Omega$.
Set now 
\begin{equation*}
q(x):= U(|x|) \quad \text{for }x\in \overline{\Omega}.
\end{equation*}
By construction, $q\equiv u$ in $\overline{S}$, whence in particular, $q$ and $u$ share the same Cauchy data on $\partial\Omega$. Moreover, \eqref{U eq} yields that $-\Delta q = g$ in $\Omega$. This completes the proof.
\end{proof}

The following lemma exploits the discreteness assumption \eqref{discreteness} to build a smooth extension of $\restr{\sigma}{D}$ to the whole domain.
\begin{lemma}\label{lem extension} Let $\sigma$ be the locally constant function defined as in \eqref{Sigma} and let the collection $\{D_i\}_{i\in I}$ defined in \eqref{D_i} satisfy the discreteness assumption \eqref{discreteness}.
Then, there exists a smooth function $\widetilde{\sigma}\in C^\infty(\mathbb{R}^N)$ such that $\restr{\widetilde{\sigma}}{D}=\restr{\sigma}{D}$.
\end{lemma}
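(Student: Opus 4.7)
The plan is to build $\widetilde{\sigma}$ as a locally finite (in fact, locally single-term) superposition of smooth bump functions, one for each phase $\overline{D_i}$, interpolating between the ambient value $1$ and the phase value $\sigma_i$. The discreteness assumption \eqref{discreteness} is what makes this possible: indeed, if $x\in\overline{D_i}\cap\overline{D_j}\cap\Omega$ for some $i\neq j$, then every ball centered at $x$ would meet both $\overline{D_i}$ and $\overline{D_j}$, contradicting \eqref{discreteness}. Hence the closures $\{\overline{D_i}\}_{i\in I}$ are pairwise disjoint inside $\Omega$, and moreover the family is locally finite in $\Omega$, since any $x\in\Omega$ admits a neighborhood meeting at most one $\overline{D_i}$.

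The construction goes in three steps. First, for every $i\in I$, I would choose an open neighborhood $V_i$ of $\overline{D_i}$ in $\mathbb{R}^N$ so that the collection $\{V_i\}_{i\in I}$ is pairwise disjoint and locally finite. This is possible because, for each $x\in \overline{D_i}\cap\Omega$, the radius $\varepsilon=\varepsilon(x)$ from \eqref{discreteness} gives a ball $B_{\varepsilon}(x)$ that meets no $\overline{D_j}$ with $j\neq i$; taking the union of sufficiently shrunken balls of this type yields a tubular neighborhood $V_i$ of $\overline{D_i}$ disjoint from every other $\overline{D_j}$. Second, for each $i$ I would pick a smooth cutoff $\psi_i\in C^\infty(\mathbb{R}^N)$ with $0\leq\psi_i\leq 1$, $\psi_i\equiv 1$ on $\overline{D_i}$, and $\supp\psi_i\subset V_i$; such a bump is standardly obtained by convolving the indicator of a thickening of $\overline{D_i}$ inside $V_i$ with a mollifier whose support is smaller than the margin between the thickening and $\partial V_i$.

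Third, define
\[
\widetilde{\sigma}(x):=1+\sum_{i\in I}(\sigma_i-1)\,\psi_i(x), \quad x\in\mathbb{R}^N.
\]
Because the $V_i$'s are pairwise disjoint, at most one term in this sum is nonzero at each point; combined with local finiteness, this shows that in a neighborhood of every point the sum reduces to a smooth expression with at most one term, whence $\widetilde{\sigma}\in C^\infty(\mathbb{R}^N)$. The required identity $\restr{\widetilde{\sigma}}{D}=\restr{\sigma}{D}$ then follows directly: for $x\in D_i$ we have $\psi_i(x)=1$ and $\psi_j(x)=0$ for $j\neq i$, whence $\widetilde{\sigma}(x)=1+(\sigma_i-1)=\sigma_i=\sigma(x)$.

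The main technical subtlety lies in the first step, specifically in guaranteeing local finiteness of $\{V_i\}_{i\in I}$ at points of $\mathbb{R}^N$ that are not covered by \eqref{discreteness}, namely points of $\partial\Omega$ where several $\overline{D_i}$'s might conceivably cluster. To handle this one shrinks the diameters of the neighborhoods $V_i$ corresponding to any such accumulating sequence of phases at a suitably fast rate (using the uniform bound $\sigma\in[\sigma_m,\sigma_M]$ to keep all the resulting bumps and their derivatives under control), so that local finiteness, and hence smoothness of $\widetilde{\sigma}$, is maintained across $\partial\Omega$ as well.
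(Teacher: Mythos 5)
Your construction is essentially the paper's: both proofs surround each closed phase $\overline{D_i}$ with a neighborhood so that the resulting family is pairwise disjoint, place on each a smooth bump equal to the phase value on $\overline{D_i}$ and vanishing (in your case, returning to the background value $1$) outside the neighborhood, and superpose. The only structural difference is how the disjoint neighborhoods are produced: the paper argues by induction, noting that \eqref{discreteness} makes the ``tail'' $K_i:=\bigcup_{j>i}\overline{D_j}$ closed and disjoint from the compact set $\overline{D_i}$, hence at positive distance from it, which yields a single uniform radius $\varepsilon_i$ per phase with $(\overline{D_i}+B_{\varepsilon_i})\cap(\overline{D_j}+B_{\varepsilon_j})=\emptyset$ for $i\ne j$; you instead build $V_i$ as a union of the balls furnished pointwise by \eqref{discreteness}. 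Up to this point the two arguments are interchangeable.

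The step that does not work as written is your final paragraph on local finiteness at $\partial\Omega$. If infinitely many $\overline{D_i}$ accumulated at some $x_0\in\partial\Omega$, then every neighborhood of $x_0$ would meet infinitely many $\overline{D_i}$, hence infinitely many $V_i$ (each $V_i$ contains $\overline{D_i}$), so no amount of shrinking the $V_i$ can restore local finiteness there; and controlling derivatives is equally hopeless, since each bump must drop from $\sigma_i$ (bounded away from nothing in particular, but not tending to $1$ in general) to $1$ over a shrinking distance. Indeed, in such a scenario the conclusion of the lemma can simply be false: if two subsequences of phases carrying values $a\ne b$ both accumulate at $x_0$, no continuous function agrees with $\sigma$ on $D$ near $x_0$. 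So this configuration must be \emph{excluded}, not accommodated. The paper's induction excludes it implicitly (positive distance between $\overline{D_i}$ and $K_i$ presupposes that distinct closures do not meet anywhere, boundary included). A clean way to close the gap is to observe that $\Gamma$ is a closed, hence compact, subset of the bounded open set $\Omega$, so the collar $\{x\in\Omega:\dist(x,\partial\Omega)<\dist(\Gamma,\partial\Omega)\}$ lies in $\Omega\setminus\Gamma$; when this collar is connected (as it is for a ball or an annulus, the only cases in which the lemma is ultimately applied), it is contained in $S$ by the uniqueness in \eqref{S}, forcing every $\overline{D_i}$ into a fixed compact subset of $\Omega$, where \eqref{discreteness} and compactness even imply that $I$ is finite. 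With that observation in place, the rest of your argument is correct.
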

\begin{proof}
Without loss of generality, let us assume that $I$ can be written as 
\begin{equation*}
    \text{either } I=\{1,\dots,n\} \;(n\in \mathbb{N}), \quad \text{or }I=\mathbb{N}.
\end{equation*}
First of all, we will show that, for each $i\in I$, we can find some radius $\varepsilon_i>0$ such that 
\begin{equation}\label{disjointedness}
 \left(\overline{D_i}+B_{\varepsilon_i}\right)\cap \left(\overline{D_j}+B_{\varepsilon_j}\right)=\emptyset \quad \text{whenever }i\ne j.   
\end{equation}
To this end, for all $i\in I$, set
\begin{equation*}
    K_i:=\bigcup_{j\in I,\; j>i} \overline{D_j},
\end{equation*}
where the empty union (corresponding to the case where $I$ is finite and $i$ is the maximum of $I$) is intended to be the empty set. Notice that, since the collection $\big\{\overline{D_i}\big\}_{i\in I}$ is discrete in the sense of \eqref{discreteness}, $K_i$ is a closed set for each $i\in I$. 
In what follows, we will construct the sequence of radii $\varepsilon_i$ by induction. First, for $i=1$, there exists some radius $\varepsilon_1>0$ such that $\overline{D_1}+B_{\varepsilon_1}$ and $K+B_{\varepsilon_1}$ do not intersect.
Assume now that we have defined a finite sequence of radii $\varepsilon_1>\cdots> \varepsilon_i$. If $i$ is the maximum of $I$, we are done. Otherwise, let $\varepsilon_{i+1}>0$ be some radius (which, without loss of generality, will be taken to be smaller than the previous one, $\varepsilon_i$) such that $\overline{D_{i+1}}+B_{\varepsilon_{i+1}}$ and $K_{i+1}+B_{\varepsilon_{i+1}}$ do not intersect. Proceeding in this way, we can define a decreasing sequence of positive radii $(\varepsilon_i)_{i\in I}$ satisfying \eqref{disjointedness}.

Then, by a standard mollification argument, for any given $i\in I$, we can find a smooth function $\xi_i\in C^\infty(\mathbb{R}^N)$ satisfying the following:
\begin{equation*}
   0\le \xi_i \le \sigma_i, \quad \restr{\xi_i}{\overline{D_i}+B_{\varepsilon_i/2}}\equiv \sigma_i, \quad \restr{\xi_i}{\mathbb{R}^N\setminus\left(\overline{D_i}+B_{\varepsilon_i} \right)}\equiv 0. 
\end{equation*}
Finally, the function $\widetilde\sigma:=\sum_{i\in I} \xi_i$ has the desired property. 
\end{proof}

In what follows, let $A$ be a spherical annulus $\{R_1<|x|<R_2\}$ for $0<R_1<R_2$.
For any Sobolev function $f\in H^1(\Omega)$, we define its angular gradient $\nabla_\theta f$ as 
\begin{equation*}
    \nabla_\theta f:= \nabla f - \left(\nabla f\cdot \tfrac{x}{|x|}\right) \tfrac{x}{|x|}.  
\end{equation*}
Clearly, $\nabla_\theta f\in L^2(A,\mathbb{R}^N)$.

The following lemma shows how the angular gradient can be used to characterize the radial symmetry of functions defined on an annulus.

\begin{lemma}\label{lem radial symmetry}
Let $A$ be a spherical annulus $\{R_1<|x|<R_2\}$. A function $f\in C(A)\cap H^1(A)$ satisfies $\nabla_\theta f=0$ if and only if it is radial.    
\end{lemma}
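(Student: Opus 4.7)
The \emph{if} direction is immediate: if $f(x)=F(|x|)$ for some scalar function $F$, then (weakly) $\nabla f(x)=F'(|x|)\,x/|x|$, so $\nabla_\theta f\equiv 0$ by the very definition of the angular gradient. The real content is the converse, for which the plan is to pass to spherical coordinates.

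I would introduce the $C^\infty$-diffeomorphism $\Phi:(R_1,R_2)\times\mathbb{S}^{N-1}\to A$, $\Phi(r,\omega)=r\omega$, and set $\tilde f=f\circ\Phi$. Since the Jacobian of $\Phi$ is bounded away from both $0$ and $\infty$ on the annulus, $\tilde f$ belongs to $H^1\bigl((R_1,R_2)\times\mathbb{S}^{N-1}\bigr)$ and inherits the continuity of $f$. Using the classical decomposition $\nabla f(x)=\partial_r\tilde f(r,\omega)\,\omega+\tfrac{1}{r}\nabla_{\mathbb{S}^{N-1}}\tilde f(r,\omega)$ at $x=r\omega$, one reads off that $\nabla_\theta f(r\omega)=\tfrac{1}{r}\nabla_{\mathbb{S}^{N-1}}\tilde f(r,\omega)$. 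Hence the hypothesis $\nabla_\theta f\equiv 0$ is equivalent to $\nabla_{\mathbb{S}^{N-1}}\tilde f=0$ in $L^2\bigl((R_1,R_2)\times\mathbb{S}^{N-1}\bigr)$.

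Next, I would invoke the Fubini/slicing characterization of Sobolev functions on a product space: for almost every $r\in(R_1,R_2)$ the slice $\omega\mapsto\tilde f(r,\omega)$ lies in $H^1(\mathbb{S}^{N-1})$ with weak gradient equal to $\nabla_{\mathbb{S}^{N-1}}\tilde f(r,\cdot)$, which by the previous step vanishes a.e. Since $N\ge 2$, the sphere $\mathbb{S}^{N-1}$ is connected, so every $H^1$-function on it with zero weak gradient is a.e.\ constant. Thus for a.e.\ $r$ there is some $\bar f(r)\in\mathbb{R}$ with $\tilde f(r,\omega)=\bar f(r)$ a.e.\ on $\mathbb{S}^{N-1}$. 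The continuity of $\tilde f$ then upgrades this to a pointwise identity on all of $(R_1,R_2)\times\mathbb{S}^{N-1}$, which is precisely the statement that $f$ is radial.

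The one step requiring genuine care is the slicing identification, namely that $\nabla_{\mathbb{S}^{N-1}}\tilde f\in L^2$ permits one to identify almost every $r$-slice of $\tilde f$ with a bona fide $H^1$-function on $\mathbb{S}^{N-1}$ whose weak gradient is the corresponding slice of $\nabla_{\mathbb{S}^{N-1}}\tilde f$. This is a standard consequence of the ACL (absolutely continuous on lines) characterization of Sobolev spaces, applied in finitely many local coordinate charts covering $\mathbb{S}^{N-1}$. Once this is in place, connectedness of $\mathbb{S}^{N-1}$ (valid since $N\ge 2$) together with the continuity-based extension from almost every $r$ to every $r$ finishes the argument with no further difficulty.
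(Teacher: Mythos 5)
Your proof is correct, but it follows a genuinely different route from the paper's. The paper never slices $f$ itself: it tests the hypothesis $\nabla_\theta f=0$ against separated test functions $\varphi(r\theta)=\psi(\theta)\eta(r)$ with $\eta\in C^\infty_c((R_1,R_2))$, integrates by parts to make the Laplace--Beltrami operator $\Delta_{\mathbb{S}^{N-1}}$ appear acting on $\psi$, and then uses the fundamental lemma of the calculus of variations in $r$ together with the spherical harmonic expansion to conclude that each spherical slice $\theta\mapsto f(r\theta)$ is $L^2$-orthogonal to every spherical harmonic of positive degree, hence constant. That argument is ``dual'': all derivatives land on the smooth test function, so the only regularity of $f$ ever exploited is continuity (to define $\xi(r)$ pointwise) and membership in $H^1$ (to make sense of $\nabla_\theta f$). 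Your argument is ``primal'': you pass to the product coordinates $(r,\omega)$, identify $\nabla_\theta f$ with $\tfrac1r\nabla_{\mathbb{S}^{N-1}}\tilde f$, and invoke the Fubini/ACL slicing theorem to realize almost every slice as an $H^1(\mathbb{S}^{N-1})$ function with vanishing weak gradient, hence constant by connectedness of the sphere, with continuity upgrading the conclusion to every $r$. Each approach has its merits: yours avoids spherical harmonics entirely and is conceptually more elementary, at the cost of the slicing identification, which you correctly flag as the one delicate step (it does hold, via the ACL characterization in finitely many charts, so there is no gap); the paper's version trades that technicality for standard facts about $\Delta_{\mathbb{S}^{N-1}}$ and its eigenfunctions and keeps everything at the level of integration against smooth test functions. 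You also dispatch the trivial ``if'' direction explicitly, which the paper leaves implicit.
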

\begin{proof}
Let $\psi\in C^\infty(\mathbb{S}^{N-1})$ and $\eta\in C^\infty_c((R_1,R_2))$ and set $\varphi(r\theta):=\psi(\theta)\eta(r)$ for $\theta\in \mathbb{S}^{N-1}$ and $r\in (R_1,R_2)$. We have
\begin{equation*}
    \begin{aligned}
0=        \int_A \nabla_\theta f\cdot \nabla \varphi = \int_A \left(\nabla f - \left(\nabla f\cdot \tfrac{x}{|x|}\right)\tfrac{x}{|x|}\right)\cdot \nabla\varphi
= -\int_A f \Delta\varphi + \int_A f \dv\left\{(\nabla\varphi\cdot\tfrac{x}{|x|})\tfrac{x}{|x|}\right\}        \\
= -\int_A f \ \big\{ \underbrace{\Delta\varphi - \left(D^2\varphi\ \tfrac{x}{|x|}\right)\cdot \tfrac{x}{|x|}  - (N-1)\nabla\varphi\cdot\tfrac{x}{|x|} }_{=\Delta_{\mathbb{S}^{N-1}}\varphi}\big\} \\
= - \int_{R_1}^{R_2}  \underbrace{\Big(\int_{\mathbb{S}^{N-1}} f(r\theta) \ \Delta_{\mathbb{S}^{N-1}}\psi(\theta) \, dS_\theta\Big)}_{:=\xi(r)} \eta(r) r^{N-1} dr ,
    \end{aligned}
\end{equation*}
where $\Delta_{\mathbb{S}^{N-1}}$ is the Laplace--Beltrami operator on the unit sphere (see \cite[Chapter 3]{atkinson_han2012spherical_harmonics_and_approximations}). 

Now, by the arbitrariness of $\eta$, the continuous function $r\mapsto \xi(r)$ must vanish everywhere in $(R_1,R_2)$. That is, for every fixed $r\in (R_1,R_2)$, the following holds: 
\begin{equation*}
    \int_{\mathbb{S}^{N-1}} f(r\theta) \ \Delta_{\mathbb{S}^{N-1}}\psi(\theta) \, dS_\theta=0\quad \text{for all }\psi\in C^\infty(\mathbb{S}^{N-1}).
\end{equation*}
In particular, for every spherical harmonic $Y_k$ (see \cite{atkinson_han2012spherical_harmonics_and_approximations}) of degree $k\ge 1$ we have
\begin{equation*}
    -\lambda_k\int_{\mathbb{S}^{N-1}} f(r\theta) Y_k(\theta)   \, dS_\theta=0\quad \text{for all }k\ge 1,
\end{equation*}
where $\lambda_k$ is the eigenvalue associated with $Y_k$. Since $\lambda_k>0$ for $k\ne 0$, then the function $\theta\mapsto f(r\theta)$ must be orthogonal in $L^2(\mathbb{S}^{N-1})$ to all spherical harmonics $Y_k$ of positive degree. Hence, $\theta\mapsto f(r\theta)$ is a constant function for all $r\in (R_1,R_2)$.
We conclude that $f$ is radial, as claimed.
\end{proof}

\section{Proof of Theorem \ref{mainthm I}}\label{section Proof Thm I}
In this section, we will give a proof of Theorem \ref{mainthm I} by employing the four results introduced in the previous section. As before, we will work in the setting given by \eqref{Omega}--\eqref{g is analytic}.

\begin{proof}[Proof of Theorem \ref{mainthm I}]
\noindent {\bf Step 1: ``$u$ is radial"} 

Let $u\in H_0^1(\Omega)$ denote the solution to the overdetermined problem \eqref{odp}. First, notice that, in either case of \eqref{two possibilities}, $u$ is radial inside $S$.

Since $\Omega=D\cupdot\Gamma\cupdot S$ (where $\cupdot$ denotes the disjoint union) and $\Gamma$ has zero Lebesgue measure by hypothesis,
\eqref{weak form} and \eqref{sigmas} yield the following:
\begin{equation}\label{new_weak form u split}
    \int_D \sigma\nabla u\cdot\nabla \varphi + \int_{S} \nabla u \cdot \nabla \varphi = \int_\Omega g\varphi \quad \text{for all }\varphi\in H^1_0 (\Omega).
\end{equation}

First, we will show that $u$ (and hence $\sigma$) is radial. To this end, let $q$ be the radial function constructed in Lemma \ref{lem radial function}.
By construction, $q$ satisfies
\begin{equation}\label{new_weak form q split}
    \int_D \nabla q\cdot\nabla \varphi + \int_{S} \nabla q \cdot \nabla \varphi = \int_\Omega g\varphi \quad \text{for all }\varphi\in H^1_0(\Omega).    
\end{equation}

In particular, since both $u$ and $q$ are real analytic functions inside the connected open set $\Omega\setminus\overline{D}$ and satisfy the same equation and the same Cauchy data on $\partial \Omega$, the Cauchy--Kovalevskaya theorem \cite[Subsection 4.6.3]{Evans2010PartialDifferentialEquations} and the identity theorem for real analytic functions imply that $u\equiv q$ in $\Omega\setminus\overline{D}$, and thus 
\begin{equation}\label{nabla equal outside}
\nabla u \equiv \nabla q  \quad \text{in } \Omega\setminus\overline{D}.   
\end{equation}
Let now $\widetilde\sigma$ denote a smooth ``extension" of $\sigma$, as constructed in Lemma \ref{lem extension}. Subtracting \eqref{new_weak form q split} from \eqref{new_weak form u split} and taking $\varphi:=\widetilde{\sigma}u - q\in H_0^1(\Omega)$ as a test function yield
\begin{equation}\label{most important identity}
0= \int_D (\sigma\nabla u - \nabla q )\cdot \nabla(\widetilde\sigma u - q) = \int_D |\nabla(\widetilde\sigma u-q)|^2,
\end{equation}
where, in the second equality, we have used the fact that $\nabla(\widetilde\sigma u)= \sigma\nabla u$ in $D$ because $\widetilde\sigma$ is locally constant in $D$.
As a result, $\sigma \nabla u = \nabla(\widetilde\sigma u)= \nabla q$ inside $D$. Combining this with \eqref{nabla equal outside} and recalling that $q$ is a radial function yield that $\nabla_\theta u = 0$ in $\Omega$. Hence, by Lemma \ref{lem radial symmetry}, the function $u$, which is continuous in $\Omega$ by \cite[Theorem 8.22]{GilbargTrudinger}, is radial in each annulus concentric with $\Omega$, and thus in $\Omega$ itself.

\noindent {\bf Step 2: ``either $I=\emptyset$, or $I=\{i\}$ and $(\overline{D_i}, \overline{\Omega})$ are  concentric closed balls"} 

Since $u$ is radial in $\Omega$, then $\sigma$ is also radial in $\Omega\setminus\Gamma$. Indeed, the identity
\begin{equation*}
  \sigma(x)= -1/\Delta u(x)
\end{equation*}
holds true for all $x\in \Omega\setminus\Gamma$ by localizing \eqref{weak form} inside each phase.
If $I=\emptyset$, then there is nothing to prove, and by Lemma \ref{lem: two cases} $\Omega$ is either a ball or a spherical annulus. Otherwise, take an arbitrary index $i\in I$ and a point $x\in D_i\setminus\{0\}$. Since $D_i$ is open, we can find some small enough radius $\varepsilon>0$ such that $B_\varepsilon(x)\subset D_i$. By definition $\sigma\equiv\sigma_i$ inside $B_\varepsilon(x)$. Moreover, since $\sigma$ is radial in $\Omega\setminus\Gamma$, it follows that $\sigma\equiv\sigma_i$ inside $\left(B_{|x|+\varepsilon}\setminus\overline{B_{|x|-\varepsilon}}\right)\setminus\Gamma$. Thus, recalling that $\Gamma$ has empty interior, we get 
\begin{equation}\label{ol D_i inside annulus}
 \overline{B_{|x|+\varepsilon}}\setminus B_{|x|-\varepsilon}\subset  \overline{D_i}.  
\end{equation}
The closed annulus in \eqref{ol D_i inside annulus}
splits $\overline\Om$ into two disjoint parts, only one of which must contain the connected set $\overline{S}$. This rules out case ($ii$) of \eqref{two possibilities}. 
Therefore, $\Omega$ must be a ball, and $\overline{S}$ is entirely contained in the outer part. In other words, 
\begin{equation}\label{S lies outside}
    \overline{S}\cap B_{|x|-\varepsilon}=\emptyset.
\end{equation}
Take now an arbitrary $\theta\in \mathbb{S}^{N-1}$ and set 
\begin{equation*}
    T:=\inf \setbld{t\ge 0}{s\theta\in \overline{D_i}\quad \text{for all }s\in \big[t,|x|\big]}.
\end{equation*}
By the considerations above, $T$ is a well-defined nonnegative real number not greater than $|x|-\varepsilon$. We claim that $T=0$, that is, the open segment (and thus the closed one) connecting $|x|\theta$ to the origin lies entirely in $\overline{D_i}$. Suppose, by way of contradiction, that $T>0$. As a result, $T\theta\in \partial\overline{D_i}$. Thus, there would exist a radius $\eta>0$ such that the open ball
$B_\eta (T\theta)$ contains a point $y$ outside $\overline{D_i}$. In particular, by \eqref{ol D_i inside annulus}, we must have $|y|<|x|-\varepsilon$, whence, by \eqref{S lies outside}, $y$ must belong to some $\overline{D_j}$ with $j\ne i$, in contrast with the discreteness assumption \eqref{discreteness}. We conclude that $T=0$ and thus the segment connecting $x$ to the origin lies in $\overline{D_i}$. Now, since $x\in D_i$ and $\theta\in \mathbb{S}^{N-1}$ were arbitrarily chosen, we have obtained the following result:
\begin{equation}\label{ball in D_i}
  \text{for all } x\in D_i, \quad \overline{B_{|x|}}\subset \overline{D_i}.
\end{equation}
Take now a point $z\in \overline{D_i}$ satisfying $|z|=\max_{\zeta\in \overline{D_i}}|\zeta|$. 
By definition,
\begin{equation}\label{D_i in Ball}
\overline{D_i}\subset \overline{B_{|z|}}.
\end{equation}
On the other hand, we can take a sequence $(z_n)_n$ of points in $D_i$ whose norms $|z_n|$ monotonically converge to $|z|$. 
Then, \eqref{ball in D_i} yields
\begin{equation*}
B_{|z|}=\bigcup_{n\in\mathbb{N}}\overline{B_{|z_n|}}\subset \overline{D_i},
\end{equation*}
whence
\begin{equation}\label{Ball in D_i}
\overline{B_{|z|}}\subset \overline{D_i}.
\end{equation}
By combining \eqref{D_i in Ball}--\eqref{Ball in D_i}, we get that $\overline{D_i}$ is a closed ball centered at the origin. Moreover, since the index $i\in I$ was arbitrary, $I$ must contain at most one element, in other words, $I=\{i\}$. This concludes the proof. 
\end{proof}

\section{Some applications to parabolic overdetermined problems}\label{section Parabolic}

Throughout this section, we will work in the setting given by \eqref{Omega}--\eqref{discreteness} and show how Theorem \ref{mainthm I} can be applied to obtain symmetry in overdetermined problems of parabolic~type.  

We start by considering problem \eqref{parabolic_bvp} under the assumption \eqref{Sigma} and the following Serrin-type overdetermined condition:
\begin{equation}\label{parabolic_Serrin-type condition}
    \partial_\nu u (x,t)= c(t) \quad \text{on } \Sigma \times (0,\infty),
\end{equation}
where $c: (0,\infty) \to \mathbb{R}$ is a function of $t$ only, $\Sigma\subset \partial\Omega$ is a sphere (see \eqref{Sigma}), and $\partial_\nu$ denotes the normal derivative computed with respect to the outward unit normal $\nu$.
The condition in \eqref{parabolic_Serrin-type condition} can be described by saying that $\Si$ is a surface with the {\it constant flow property}, in the sense that the heat flux across $\Si$ depends only on time (see, for instance, \cite{savo2016heat,savo2018geometric,CavallinaMagnaniniSakaguchi2021constantflowproperty} for details). We remark that \eqref{parabolic_Serrin-type condition} can be interpreted in the classical sense, that is, pointwise, thanks to the boundary regularity theory (see \cite[Theorem 5.2, Chapter IV Section 5
 ]{ladyzhenskaja1968linear}). 

\begin{corollary}\label{cor_parabolic_1}
In the setting given by \eqref{Omega}--\eqref{Sigma}, the following are equivalent.
\begin{enumerate}[(i)]
    \item The overdetermined problem \eqref{parabolic_bvp}--\eqref{parabolic_Serrin-type condition} admits a weak solution $u$ (see  Definition~\ref{definition}). 
    \item 
    \begin{itemize}
        \item Either $I=\emptyset$ and $\Omega$ is either a ball or a spherical annulus (one-phase), 
        \item or $I=\{i\}$ and $(\overline{D}, \overline{\Omega})=(\overline{D_i}, \overline{\Omega})$ are  concentric closed balls (two-phase).
    \end{itemize}
\end{enumerate}
\end{corollary}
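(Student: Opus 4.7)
The plan is to reduce the parabolic overdetermined problem to the elliptic one already handled by Theorem \ref{mainthm I}, via a time-integration (Laplace-type) trick.

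The implication (ii) $\Rightarrow$ (i) is the easy direction. Under the concentric balls configuration, the operator $-\dv(\sigma\nabla\cdot)$ and the initial/boundary data in \eqref{parabolic_bvp} are all invariant under the orthogonal group $O(N)$. By uniqueness of the weak solution, $u(\cdot,t)$ is radially symmetric for every $t>0$, and hence $\partial_\nu u(x,t)$ depends only on $|x|$ and $t$; in particular, it is a function of $t$ alone on the concentric sphere $\Sigma$.

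For the nontrivial direction (i) $\Rightarrow$ (ii), define the time-integrated function
\[
v(x) := \int_0^\infty u(x,t)\, dt.
\]
The spectral theorem applied to the self-adjoint operator $-\dv(\sigma\nabla\cdot)$ with zero Dirichlet boundary data on $\Omega$ (whose first eigenvalue $\lambda_1$ is strictly positive thanks to \eqref{partial Omega}) yields the exponential decay
$\|u(\cdot,t)\|_{L^2(\Omega)} \le e^{-\lambda_1 t}$,
and testing the weak form of \eqref{parabolic_bvp} against $u$ itself gives $\int_0^\infty \int_\Omega \sigma |\nabla u|^2\, dx\, dt < \infty$. Together, these estimates show that $v \in H^1_0(\Omega)$. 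Integrating the weak formulation of \eqref{parabolic_bvp} in time from $0$ to $T$ against any $\varphi \in H^1_0(\Omega)$, and letting $T \to \infty$ while using the decay above to kill the boundary term $\int_\Omega u(\cdot,T)\varphi$, produces
\[
\int_\Omega \sigma \nabla v \cdot \nabla \varphi \, dx = \int_\Omega \varphi \, dx \quad \text{for all } \varphi \in H^1_0(\Omega),
\]
so $v$ solves $-\dv(\sigma \nabla v) = 1$ weakly in $\Omega$ with $v=0$ on $\partial\Omega$.

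Next we must pass the overdetermined condition through the time integral. Because $\Sigma$ is a connected component of $\partial\Omega$ contained in $\overline{S}$ and $\sigma \equiv 1$ throughout $S$, the standard parabolic Schauder theory on a neighborhood of $\Sigma \times (0,\infty)$ (combined with exponential-in-time decay of all $x$-derivatives of $u$ up to the boundary) justifies interchanging the normal derivative and the time integral:
\[
\partial_\nu v(x) = \int_0^\infty \partial_\nu u(x,t)\, dt = \int_0^\infty c(t)\, dt =: C \quad \text{for all } x \in \Sigma,
\]
with the integral finite and independent of $x \in \Sigma$. Hence $v \in H^1_0(\Omega)$ satisfies the elliptic overdetermined problem \eqref{odp} with $g \equiv 1$ (which is positive, radial, and real analytic) and constant $C$. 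Theorem \ref{mainthm I} then yields conclusion (ii).

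The only real obstacle is the rigorous justification of the interchange of $\partial_\nu$ with $\int_0^\infty$ on $\Sigma$, and the integrability of $t \mapsto c(t)$: both require a uniform-in-time regularity/decay estimate for $u$ up to $\Sigma$. This is handled by combining the global exponential $L^2$-decay with interior/boundary parabolic Schauder estimates in a tubular neighborhood of $\Sigma$ inside the shell $S$, where the coefficient $\sigma$ is smooth (constant). Once this technical bridge is in place, the reduction to Theorem \ref{mainthm I} is immediate.
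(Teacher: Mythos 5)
Your proposal is correct and follows essentially the same route as the paper: the implication $(i)\Rightarrow(ii)$ is reduced to Theorem \ref{mainthm I} by integrating in time, $v=\int_0^\infty u(\cdot,t)\,dt$, using the exponential $L^2$-decay coming from the first Dirichlet eigenvalue of $-\dv(\sigma\nabla\cdottone)$ to pass to the limit $T\to\infty$ in the time-integrated weak formulation. The only divergence is in how the overdetermined condition is transferred to $v$: you invoke boundary parabolic Schauder estimates and exponential decay of derivatives near $\Sigma$ to interchange $\partial_\nu$ with $\int_0^\infty$, whereas the paper's Appendix stays entirely at the level of the weak formulation (testing with functions in $H^1_\Sigma(\Omega)$, in particular one identically equal to $1$ on $\Sigma$, to get $c\in L^1(0,\infty)$) — both are viable, but the paper's weak-formulation argument avoids the unproven uniform-in-time boundary regularity estimates your sketch relies on.
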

\begin{proof}
Clearly $(ii) \implies (i)$, so we only need to prove $(i) \implies (ii)$. Let $u$ be solution of \eqref{parabolic_bvp}--\eqref{parabolic_Serrin-type condition}. Following \cite{sakaguchi2016rendiconti}, we consider
\begin{equation}
    v(x)= \int_0^\infty u(x,t) dt .
\end{equation}
As one can easily check by formal computation with \eqref{parabolic_bvp} at hand, $v$ satisfies \eqref{odp} with $g\equiv 1$ (see Appendix \ref{Appendix} for a rigorous proof of this fact). Thus, the result easily follows from Theorem~\ref{mainthm I}.
\end{proof}

We are now ready to provide the proof of Theorem~\ref{thm:parabolic}.
\begin{proof}[Proof of Theorem \ref{thm:parabolic}]
Clearly $(iii) \implies (i)$ and $(iii) \implies (ii)$. We are going to prove $(i) \implies (iii)$, and $(ii) \implies (iii)$.

$(i) \implies (iii)$. Reasoning as in \cite{sakaguchi2016rendiconti, sakaguchibessatsu} (see, for instance, \cite[first paragraph of Section~3]{sakaguchi2016rendiconti}), we get that $u$ is radial in $\overline{\Omega} \setminus D$. Without loss of generality, we assume that the center of symmetry is the origin. Given the boundary condition in \eqref{parabolic_bvp}, we arrive at the two cases in \eqref{two possibilities}.
Thus, we have that $u$ solves the overdetermined problem \eqref{parabolic_bvp}--\eqref{parabolic_Serrin-type condition} and the desired conclusion follows from Corollary \ref{cor_parabolic_1}.

$(ii) \implies (iii)$. Reasoning as in \cite[Second and third paragraphs of Subsection 4.2]{CavallinaMagnaniniSakaguchi2021constantflowproperty}, we obtain, as before, that $u$ is radial in $\overline{\Omega} \setminus D$. The rest of the proof is identical to that of the implication $(i) \implies (iii)$. 
\end{proof}

\appendix
\section{Integrating the solution of the multi-phase heat equation}\label{Appendix}

In what follows, let $\Omega$ be a bounded domain in $\mathbb{R}^N$ ($N\ge 2$).
In this appendix, we will show how to rigorously justify the formal integration described in the proof of Corollary~\ref{cor_parabolic_1}. 

In what follows, we will handle functions that depend on both the time and space variables. In this setting, it will be helpful to make use of the ``Bochner space approach" to untangle the dependencies on each variable. For instance, a function $f=f(x,t)$ will be identified with the mapping $t\mapsto f(t)$, where $f(t)(\cdottone)$ is an element of some function space (which will be specified at each occurrence) and $x\mapsto f(t)(x):=f(x,t)$.

We recall the definition of weak solution to the initial/boundary-value problem \eqref{parabolic_bvp} (see \cite[Definition of Subsection 7.1.1]{Evans2010PartialDifferentialEquations}).
\begin{definition}[Weak solution]\label{definition} 
We say that a function $u$ is a weak solution to \eqref{parabolic_bvp} if, for any positive $T>0$,  
\begin{equation}\label{conditions for an admissible weak solution}
    u\in L^2 \left((0, T ),H_0^1(\Omega)\right), \quad \partial_t u\in L^2\left((0,T), H^{-1}(\Omega)\right)
\end{equation}
and the following holds for each $\varphi\in H_0^1(\Omega)$ and almost every $t\in [0,\infty)$:
\begin{equation}\label{weak form parabolic eq}
 \left\langle \partial_t u(t), \varphi\right\rangle  = - \int_\Omega \sigma\nabla u(t)\cdot \nabla\varphi\, dx  
\end{equation}
and 
\begin{equation}\label{app: initial condition}
    u(0)\equiv 1.
\end{equation}
Here and in what follows, we make use of the notation $\langle\cdottone,\cdottone\rangle$ to denote the duality pairing between $H^{-1}(\Omega):=\left(H_0^1(\Omega)\right)^*$ and $H_0^1(\Omega)$.
The time derivative $\partial_t u$ in the expression above has to be understood in a suitable weak sense. Namely, $\partial_t u\in L^2\left((0,T), H^{-1}(\Omega)\right)$ is the function such that the following identity between Bochner integrals holds for all $\psi\in C_c^\infty(0,T)$:
\begin{equation}\label{weak time derivative}
 \int_0^T \partial_t u(t) \psi(t)\, dt = - \int_0^T u(t)\psi'(t)\, dt.   
\end{equation}
\end{definition}
We remark that \eqref{app: initial condition} makes sense since $u\in C\left([0,T], L^2(\Omega)\right)$ by \cite[Theorem~3 in Subsection 5.9.2]{Evans2010PartialDifferentialEquations}.

From Definition \ref{definition}, it is not difficult to obtain the following decay estimate for the $L^2(\Om)$-norm of $u$.
\begin{lemma}
\begin{equation}\label{eq:decay estimate for L^2 norm}
    \int_{\Omega} u(T)^2 \, dx \le | \Omega | \, e^{- 2\lambda T} \quad \text{ for any } T\ge 0 ,
\end{equation}
where 
\begin{equation*}
\lambda := \inf_{w\in H_0^1(\Omega)\setminus\{0\}} \frac{\int_\Omega \sigma |\nabla w|^2 \, dx}{\int_\Omega w^2 \, dx}>0
\end{equation*}
is the first eigenvalue of the operator $-\Delta_\sigma:= \dv\left(\sigma\nabla\cdottone\right)$ with Dirichlet boundary condition in $\Omega$.    
\end{lemma}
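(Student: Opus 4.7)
The plan is to carry out a standard energy-method computation directly from the weak formulation in Definition~\ref{definition}, combined with the variational characterization of $\lambda$.

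First, I would set $E(t):=\int_\Omega u(t)^2 \, dx$ and show that $E$ is absolutely continuous with
\begin{equation*}
\tfrac{d}{dt} E(t) = 2 \left\langle \partial_t u(t), u(t) \right\rangle \quad \text{for a.e. } t\in(0,T).
\end{equation*}
This identity is the standard product rule in Bochner spaces for functions satisfying \eqref{conditions for an admissible weak solution}, and follows from \cite[Theorem~3 in Subsection~5.9.2]{Evans2010PartialDifferentialEquations}, which also guarantees $u \in C([0,T], L^2(\Omega))$ so that the initial datum \eqref{app: initial condition} gives $E(0)=|\Omega|$.

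Next, I would test \eqref{weak form parabolic eq} with $\varphi=u(t) \in H_0^1(\Omega)$ (valid for a.e.\ $t$) to obtain
\begin{equation*}
\tfrac{1}{2}\, \tfrac{d}{dt} E(t) = \left\langle \partial_t u(t), u(t)\right\rangle = -\int_\Omega \sigma |\nabla u(t)|^2 \, dx .
\end{equation*}
Invoking the variational definition of $\lambda$ applied to $w = u(t)$ yields $\int_\Omega \sigma |\nabla u(t)|^2 \, dx \ge \lambda\, E(t)$, and thus
\begin{equation*}
\tfrac{d}{dt} E(t) \le -2\lambda\, E(t) \quad \text{for a.e. } t>0.
\end{equation*}
Grönwall's inequality combined with $E(0)=|\Omega|$ then gives \eqref{eq:decay estimate for L^2 norm}. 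The positivity $\lambda>0$ follows immediately from the Poincaré inequality on the bounded domain $\Omega$ together with the uniform lower bound $\sigma \ge \sigma_m>0$.

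No step here is a genuine obstacle: the only point requiring a little care is the justification of the chain rule for $\|u(t)\|_{L^2(\Omega)}^2$, but this is precisely what the regularity in \eqref{conditions for an admissible weak solution} is designed to guarantee, and it is a well-established result in the Bochner-space framework used in Definition~\ref{definition}.
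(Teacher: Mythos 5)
Your proposal is correct and follows essentially the same argument as the paper: test \eqref{weak form parabolic eq} with $\varphi=u(t)$, use the product rule $2\langle\partial_t u(t),u(t)\rangle=\partial_t\int_\Omega u(t)^2\,dx$ from \cite[Theorem~3 of Subsection 5.9.2]{Evans2010PartialDifferentialEquations}, bound the energy term via the variational characterization of $\lambda$, and close with a Gr\"onwall-type step (the paper simply multiplies by the integrating factor $e^{2\lambda t}$ and integrates, which is the same computation).
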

\begin{proof}
For any fixed $t > 0$, taking $\varphi:= u(t)$ as a test function in \eqref{weak form parabolic eq} we find that
\begin{equation*}
   \begin{split}
       0 
       & = 2 \left[  \langle \partial_t u(t) , u(t) \rangle + \int_{\Omega} \sigma |\nabla u(t)|^2 \,dx \right]
       \\
       & \ge  \partial_t \left[ \int_{\Omega} u(t)^2 \, dx \right]  + 2\lambda \int_{\Omega} u(t)^2 \, dx ,
   \end{split} 
\end{equation*}
where in the last inequality we used that $2 \langle \pa_t u(t) , u(t) \rangle = \partial_t \left( \int_{\Omega} u(t)^2 \, dx\right)$ (see \cite[Theorem~3 of Subsection 5.9.2]{Evans2010PartialDifferentialEquations}) and the definition of $\lambda$. Multiplying by $e^{2\lambda t}$, the last formula gives that
\begin{equation*}
    0 \ge \partial_t \left[ \left( \int_{\Omega} u(t)^2 \, dx \right) \, e^{2\lambda t}  \right] ,
\end{equation*}
which integrated with respect to $t$ on $(0,T)$ gives
\begin{equation*}
 0 \ge \left( \int_{\Omega} u(T)^2  \, dx \right) \, e^{2\lambda T} - \int_{\Omega} u(0)^2 \, dx ,   
\end{equation*}
from which \eqref{eq:decay estimate for L^2 norm} follows by recalling that $u(0) \equiv 1$ by the initial condition in \eqref{parabolic_bvp}.
\end{proof}

\begin{theorem}
    If $u$ is a weak solution to \eqref{parabolic_bvp}, then $t\mapsto u(t)\in H_0^1(\Omega)$ is Bochner integrable on $(0,\infty)$~and
    \begin{equation}\label{v definition}
    v := \int_0^\infty u(t) \, dt 
    \end{equation}
    is a weak solution to
    \begin{equation}\label{eq:Appendix: two-phase Dirichlet elliptic}
    -\dv(\sigma\nabla v)=1 \quad \text{in }\Omega, \quad v=0\quad \text{on }\partial\Omega .
\end{equation}

\end{theorem}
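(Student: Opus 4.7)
The plan is to split the argument into two parts: first, I would establish that $t \mapsto u(t)$ is Bochner integrable on $(0,\infty)$ with values in $H_0^1(\Om)$, so that $v$ in \eqref{v definition} is a well-defined element of $H_0^1(\Om)$; second, I would verify the weak formulation of \eqref{eq:Appendix: two-phase Dirichlet elliptic} by commuting $\nabla$ with the Bochner integral and passing to the limit $T \to \infty$ in the integrated weak formulation \eqref{weak form parabolic eq}.

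For Bochner integrability, the crucial input is the exponential $L^2$-decay \eqref{eq:decay estimate for L^2 norm}, and the plan is to upgrade this to an exponential bound for $\|u(t)\|_{H_0^1(\Om)}$ in the $L^1_t$ sense. I would test \eqref{weak form parabolic eq} with $\varphi := u(t)$ and integrate in time over $(T, T+1)$ for $T \in \mathbb{N}$, yielding the energy identity
\begin{equation*}
\int_T^{T+1} \int_\Om \sigma \, |\nabla u(t)|^2 \, dx \, dt = \tfrac{1}{2} \left( \int_\Om u(T)^2 \, dx - \int_\Om u(T+1)^2 \, dx \right) \le \tfrac{1}{2} \, |\Om| \, e^{-2\lambda T} .
\end{equation*}
Using $\sigma \ge \sigma_m$ together with Cauchy--Schwarz in time then gives $\int_T^{T+1} \|\nabla u(t)\|_{L^2(\Om)} \, dt \le C \, e^{-\lambda T}$ for a constant $C$ depending only on $|\Om|$ and $\sigma_m$. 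Summing a geometric series over $T \in \mathbb{N}$ makes $\int_0^\infty \|u(t)\|_{H_0^1(\Om)} \, dt$ finite; since $H_0^1(\Om)$ is a closed subspace of $H^1(\Om)$, $v$ is a well-defined element of $H_0^1(\Om)$.

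To verify the weak formulation, I would fix an arbitrary test function $\varphi \in H_0^1(\Om)$ and use that $\nabla : H_0^1(\Om) \to L^2(\Om)^N$ is a bounded linear operator, hence commutes with Bochner integration, giving $\nabla v = \int_0^\infty \nabla u(t) \, dt$ in $L^2(\Om)^N$. Combining Fubini's theorem with \eqref{weak form parabolic eq} then yields
\begin{equation*}
\int_\Om \sigma \nabla v \cdot \nabla \varphi \, dx = \int_0^\infty \int_\Om \sigma \nabla u(t) \cdot \nabla \varphi \, dx \, dt = - \int_0^\infty \langle \partial_t u(t), \varphi \rangle \, dt .
\end{equation*}
By the Bochner fundamental theorem of calculus \cite[Theorem~3 in Subsection 5.9.2]{Evans2010PartialDifferentialEquations} and the initial condition $u(0) \equiv 1$, the truncated integral $\int_0^T \langle \partial_t u(t), \varphi \rangle \, dt$ equals $\int_\Om \left( u(T) - 1 \right) \varphi \, dx$; the decay \eqref{eq:decay estimate for L^2 norm} then kills the $u(T)$ contribution as $T \to \infty$, leaving $\int_\Om \sigma \nabla v \cdot \nabla \varphi \, dx = \int_\Om \varphi \, dx$, which is precisely the weak formulation of \eqref{eq:Appendix: two-phase Dirichlet elliptic}.

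The step I expect to be the main obstacle is the upgrade from the $L^2$-exponential decay to an $L^1_t$-bound on the $H_0^1$-norm, since this is what truly secures Bochner integrability of $t \mapsto u(t)$ as an $H_0^1$-valued map (as opposed to merely as an $L^2$-valued one). The unit-interval energy identity accomplishes exactly this, and is the one place in the argument that requires a small amount of genuine insight rather than a direct application of standard Bochner calculus.
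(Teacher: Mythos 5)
Your argument is correct, and the verification half (integrating \eqref{weak form parabolic eq} in $t$, commuting bounded linear operators with the Bochner integral, applying the fundamental theorem of calculus with $u(0)\equiv 1$, and letting $T\to\infty$ via \eqref{eq:decay estimate for L^2 norm}) matches the paper's. Where you genuinely diverge is in how you secure the $H_0^1$-valued integrability. You prove it head-on: testing with $u(t)$ gives the energy identity $\int_T^{T+1}\int_\Omega\sigma|\nabla u(t)|^2\,dx\,dt=\tfrac12\big(\|u(T)\|_{L^2}^2-\|u(T+1)\|_{L^2}^2\big)\le\tfrac12|\Omega|e^{-2\lambda T}$, and Cauchy--Schwarz in time plus a geometric series yields $\int_0^\infty\|u(t)\|_{H_0^1}\,dt<\infty$; together with the strong measurability already contained in \eqref{conditions for an admissible weak solution}, this is exactly Bochner integrability in $H_0^1(\Omega)$. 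The paper instead only records Bochner integrability of $t\mapsto u(t)$ as an $L^2(\Omega)$-valued map, identifies $v_T:=\int_0^T u(t)\,dt$ as $(-\Delta_\sigma)^{-1}(1-u(T))$, and obtains $v_T\to(-\Delta_\sigma)^{-1}(1)$ in $H_0^1(\Omega)$ from the boundedness of the solution operator $L^2(\Omega)\to H_0^1(\Omega)$ and the decay of $u(T)$ in $L^2(\Omega)$ --- no energy estimate needed. The trade-off: the paper's route is slicker and avoids the unit-interval energy bound, but it establishes convergence of the improper integral $\lim_{T\to\infty}v_T$ in $H_0^1(\Omega)$ rather than absolute integrability of $\|u(t)\|_{H_0^1}$; your energy argument is the one that literally delivers the Bochner integrability in $H_0^1(\Omega)$ asserted in the theorem statement, and once you have it, the rest reduces to routine commutation of bounded operators with the integral. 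Both are valid; yours is marginally more work and marginally more faithful to the claim as stated.
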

\begin{proof}
First of all, notice that $t\mapsto u(t)\in L^2(\Omega)$ is Bochner integrable on $(0,\infty)$ because it is measurable and its norm is summable by \eqref{eq:decay estimate for L^2 norm}.
Integrating \eqref{weak form parabolic eq} with respect to $t$ on $(0,T)$ yields
\begin{equation}\label{app: zeroth equation}
    \int_0^T \langle \partial_t u(t),\varphi\rangle\, dt = \int_0^T \left( \int_\Omega \sigma \nabla u(t)\cdot \nabla\varphi   \right)\, dt.
\end{equation}
By using the fact that Bochner integrals commute with the bounded linear operators $\langle \cdottone,\varphi\rangle: H^{-1}(\Omega)\to \mathbb{R}$, $\int_\Omega \sigma\nabla(\cdottone) \cdot \nabla\varphi\, dx: H_0^1(\Omega)\to \mathbb{R}$, and $\nabla: H_0^1(\Omega)\to L^2(\Omega, \mathbb{R}^N)$ and the fundamental theorem of calculus applied to the function $u\in W^{1,2}\left((0,T), H^{-1}(\Omega)\right)$ (see \cite[Theorem 2, (ii) of Subsection 5.9.2]{Evans2010PartialDifferentialEquations}) we get
\begin{equation}\label{app: first equation}
\int_0^T \langle \partial_t u(t),\varphi\rangle\, dt = \left\langle \int_0^T \partial_t u(t) \, dt, \varphi\right\rangle= \langle (u(T)-u(0)), \varphi\rangle = \int_\Omega  (u(T)-u(0))\varphi\, dx
\end{equation}
and 
\begin{equation}\label{app: second equation}
\begin{split}
\int_0^T \left( \int_\Omega \sigma \nabla u(t)\cdot \nabla\varphi   \right)\, dt
& =\int_\Omega\sigma \left(\int_0^T\nabla u(t) \, dt\right) \cdot \nabla\varphi \, dx\\
& = \int_\Omega\sigma \nabla\left(\int_0^T u(t) dt\right) \cdot \nabla\varphi \, dx.
\end{split}
\end{equation}
Combining \eqref{app: zeroth equation}, \eqref{app: first equation}, \eqref{app: second equation}, and recalling that $u(0) \equiv 1$ by the initial condition in~\eqref{parabolic_bvp}, we obtain that
\begin{equation*}
\int_\Omega  ( u(T) - 1 ) \varphi\, dx=    \int_\Omega\sigma \nabla\left(\int_0^T u(t) dt\right) \cdot \nabla\varphi \, dx.
\end{equation*}
Now, since $\varphi\in H_0^1(\Omega)$ is arbitrary, the above implies that the function \begin{equation*}
v_T:=\int_0^T u(t) \, dt\in H_0^1(\Omega)    
\end{equation*}
is a weak solution to 
\begin{equation}\label{some elliptic equation}
    \begin{cases}
        -\dv\left(\sigma\nabla v_T\right)=f\quad \text{in }\Omega,\\
        v_T=0\quad \text{on }\partial\Omega,
    \end{cases}
\end{equation}
for $f=1-u(T)$.
Let now $(-\Delta_\sigma)^{-1}$
denote the ``solution operator" that assigns to any $f\in L^2(\Omega)$ the solution to \eqref{some elliptic equation}. We recall that by the standard Hilbert space approach to elliptic equations (see \cite[Section 8.2]{GilbargTrudinger}), $(-\Delta_\sigma)^{-1}$
is a bounded linear operator from $H^{-1}(\Omega)$ to $H_0^1(\Omega)$ (hence, in particular, from $L^2(\Omega)$ to $H_0^1(\Omega)$).
Finally, by using the fact that the solution $u(t)$ to \eqref{weak form parabolic eq} converges to $0$ in the $L^2(\Omega)$-norm as $t\to\infty$ (see \eqref{eq:decay estimate for L^2 norm}), we obtain that 
\begin{equation*}
 v:=\int_0^\infty u(t) dt = \lim_{T\to\infty} v_T = \lim_{T\to \infty} (-\Delta_\sigma)^{-1}\left(1-u(T)\right)=(-\Delta_\sigma)^{-1} (1),
\end{equation*}
where the limits above hold in the $H_0^1(\Omega)$-norm.
In other words, $v$ solves \eqref{eq:Appendix: two-phase Dirichlet elliptic}, and the first part of the claim is proved.
\end{proof}

\begin{definition}\label{def neumann on Sigma}
Let $\Sigma\subset \partial\Omega$ be a Lipschitz portion of $\partial\Omega$ and let $c(t)\in L^2(\Sigma)$ for $t\in (0,\infty)$.
We say that a function $u\in C\left([0,T], L^2(\Omega)\right)$ for all $T>0$ satisfying \eqref{conditions for an admissible weak solution} is a weak solution to \begin{equation}\label{parabolic eq + Neumann = c(t)}
\begin{cases}
\partial_t u = \dv(\sigma\nabla u) \quad \text{in }\Omega \times (0,\infty),\\
u=1 \quad \text{in } \Omega \times \left\{ 0 \right\} , \\
\partial_\nu u(t)=c(t)\quad \text{on }\Sigma\times (0,\infty)
\end{cases}
\end{equation}
if $u(0)\equiv 1$ and the following holds for almost every $t\in (0,\infty)$, and for all $\varphi\in H_\Sigma^1(\Omega)$:
\begin{equation}\label{neumann weak condition}
 \left\langle \partial_t u(t), \varphi\right\rangle  = - \int_\Omega \sigma\nabla u(t)\cdot \nabla\varphi\, dx + \int_\Sigma c(t)\varphi\, dS. 
\end{equation}
Here $H_\Sigma^1(\Omega)\subset H^1(\Omega)$ is the subset of functions with ``vanishing trace" outside $\Sigma$:
\begin{equation*}
H_\Sigma^1(\Omega):=\overline{\setbld{\varphi\in C^\infty(\Omega)}{\text{$\exists K\subset \Omega\cup \Sigma$ compact: $\restr{\varphi}{\Omega\setminus K}\equiv 0$}}}^{\ \norm{\cdottone}_{H^1(\Omega)}}.
\end{equation*}
\end{definition}
\begin{theorem}
 Let $\Sigma$ be a Lipschitz portion of $\partial\Omega$ and let $u$ be a weak solution to \eqref{parabolic eq + Neumann = c(t)}. 
 Moreover, assume that at least one of the following assumptions holds:
 \begin{enumerate}[(i)]
    \item $c\in L^1\left((0,\infty), L^2(\Sigma)\right)$.
    \item For each $t>0$, $c(t)$ is a constant function on $\Sigma$.
    \item The function $(0,\infty)\times \Sigma \ni (t,x)\mapsto c(t)(x)\in [-\infty,\infty]$ is measurable. Moreover, either $c(t)\le 0$ or $c(t)\ge 0$ holds almost everywhere on $\Sigma$ and for almost all $t>0$.
\end{enumerate}
 
 Then $\int_0^\infty c(t)\, dt$ is a well-defined measurable function on $\Sigma$. Moreover, the function $v$ defined in \eqref{v definition} satisfies
 \begin{equation}\label{weak neumann condition for v}
    \int_\Omega \sigma\nabla v\cdot\nabla\varphi \, dx = \int_\Omega \varphi\, dx + \int_\Sigma \left(\int_0^\infty c(t)\, dt\right)\varphi\, dS \quad \text{for all}\quad \varphi\in H_\Sigma^1(\Omega)
 \end{equation}
and the integrals above are all finite. 
\end{theorem}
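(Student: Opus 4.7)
The plan is to adapt the argument of the previous theorem: integrate the weak formulation \eqref{neumann weak condition} in time over $(0,T)$, exchange the order of integration via Fubini/Tonelli, and pass to the limit $T\to\infty$. The only novelty with respect to the Dirichlet case is the need to handle the boundary term on $\Sigma$, for which the three assumptions $(i)$--$(iii)$ will be used at different places.

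The first step is to show that $\int_0^\infty c(t)\,dt$ is a well-defined measurable function on $\Sigma$. Under $(i)$ this follows directly from Bochner integrability in $L^2(\Sigma)$. Under $(ii)$, since $c(t)$ is constant on $\Sigma$ for each $t$, the integral reduces to a scalar quantity; its finiteness will be guaranteed \emph{a posteriori} by the finite-$T$ identity derived in the next step, after testing with a $\varphi \in H_\Sigma^1(\Omega)$ whose restriction to $\Sigma$ is nonzero. Under $(iii)$, the constant-sign assumption allows Tonelli's theorem to produce a (possibly extended real valued) measurable function on $\Sigma$.

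Next, I would fix $T>0$ and $\varphi\in H_\Sigma^1(\Omega)$, and integrate \eqref{neumann weak condition} in $t$ over $(0,T)$. Exactly as in the proof of the previous theorem, Bochner integration commutes with the bounded linear operators $\langle \cdot, \varphi\rangle \colon H^{-1}(\Omega) \to \mathbb{R}$ and $\nabla \colon H_\Sigma^1(\Omega) \to L^2(\Omega,\mathbb{R}^N)$, and the fundamental theorem of calculus together with $u(0)\equiv 1$ yields
\begin{equation*}
\int_\Omega (u(T)-1)\,\varphi\,dx \;=\; -\int_\Omega \sigma\nabla v_T\cdot\nabla\varphi\,dx \;+\; \int_\Sigma \Bigl(\int_0^T c(t)\,dt\Bigr)\varphi\,dS,
\end{equation*}
where $v_T:=\int_0^T u(t)\,dt \in H_\Sigma^1(\Omega)$. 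The swap of the time and surface integrals in the last term is justified by Fubini under $(i)$, by the elementary product structure under $(ii)$, or by Tonelli under $(iii)$.

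Finally, I would send $T\to\infty$. The decay $u(T)\to 0$ in $L^2(\Omega)$ can be obtained by testing \eqref{neumann weak condition} with $\varphi=u(t)$ and combining the coercivity on $H_\Sigma^1(\Omega)$ (which still yields a positive first eigenvalue, thanks to the nontrivial Dirichlet portion $\partial\Omega\setminus\Sigma$) with the integrability of $c$ supplied by each of $(i)$--$(iii)$; from this one also deduces $v_T\to v$ in $H_\Sigma^1(\Omega)$ via the bounded solution operator associated with $-\dv(\sigma\nabla\cdot)$ with mixed boundary conditions. Convergence of $\int_0^T c(t)\,dt$ to $\int_0^\infty c(t)\,dt$ paired with $\varphi$ is then dominated convergence under $(i)$, direct under $(ii)$, and monotone convergence under $(iii)$. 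The main obstacle is precisely this passage to the limit: unlike the pure Dirichlet case, the Neumann boundary term on $\Sigma$ can feed energy into the system, so the decay of $u(T)$ and the convergence of $v_T$ are not automatic and require a Gronwall-type estimate that carefully combines coercivity with the integrability or sign information carried by assumptions $(i)$--$(iii)$.
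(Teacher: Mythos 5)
Your mechanical outline (integrate \eqref{neumann weak condition} in time, swap the order of integration separately under $(i)$--$(iii)$, pass to the limit) is the same as the paper's, but the ``main obstacle'' you identify at the end --- that the Neumann term on $\Sigma$ could feed energy into the system, so that the decay of $u(T)$ and the convergence $v_T\to v$ would need a Gronwall-type estimate combining coercivity on $H_\Sigma^1(\Omega)$ with the integrability of $c$ --- rests on a misreading of Definition \ref{def neumann on Sigma}. A weak solution of \eqref{parabolic eq + Neumann = c(t)} is required to satisfy \eqref{conditions for an admissible weak solution}, i.e.\ $u(t)\in H_0^1(\Omega)$: the problem is \emph{overdetermined}, not a mixed Dirichlet--Neumann problem. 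Testing \eqref{neumann weak condition} with $\varphi\in H_0^1(\Omega)\subset H_\Sigma^1(\Omega)$ kills the boundary term (the trace of such $\varphi$ on $\Sigma$ vanishes), so $u$ is in particular a weak solution of the Dirichlet problem \eqref{parabolic_bvp}; the decay estimate \eqref{eq:decay estimate for L^2 norm}, the Bochner integrability of $t\mapsto u(t)$, and the convergence $v_T\to v$ in $H_0^1(\Omega)$ are therefore inherited verbatim from the preceding lemma and theorem, with no extra energy estimate. Worse, the fix you propose would be circular in cases $(ii)$ and $(iii)$: there the integrability of $c$ is itself one of the conclusions to be proved, and the paper obtains it \emph{from} the already-known convergence of the other two terms --- in case $(ii)$ by solving \eqref{neumann weak condition} for $c(t)$ against a test function $\varphi_0$ with $\restr{\varphi_0}{\Sigma}\equiv 1$, and in case $(iii)$ by integrating against $\varphi_0$ and invoking Tonelli --- so it cannot be fed as an input into a decay argument.

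Two smaller gaps. Under $(ii)$, ``finiteness guaranteed a posteriori by the finite-$T$ identity'' is not enough: that identity only controls $\int_0^T c(t)\,dt$ for each fixed $T$, and convergence of $\int_0^T c(t)\,dt$ as $T\to\infty$ does not yield $c\in L^1\left((0,\infty),\mathbb{R}\right)$; you need the explicit representation of $c(t)$ in terms of $\langle\partial_t u(t),\varphi_0\rangle$ and $\int_\Omega\sigma\nabla u(t)\cdot\nabla\varphi_0\,dx$ to get summability. Under $(iii)$, Tonelli applies to a nonnegative integrand, so for a general $\varphi$ you must first split $\varphi=\varphi_+-\varphi_-$ and treat $c(t)\varphi_\pm$ separately, as the paper does. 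Finally, note that $v_T=\int_0^T u(t)\,dt$ lies in $H_0^1(\Omega)$, not merely in $H_\Sigma^1(\Omega)$.
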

\begin{proof}
\noindent {\bf When $(i)$ holds.} 

Fix an arbitrary $\varphi\in H_\Sigma^1(\Omega)$. Then, the map $t\mapsto c(t)\varphi\in L^1(\Sigma)$ is Bochner integrable on $(0,\infty)$ as it is measurable and its norm is summable by the Cauchy--Schwarz inequality:
\begin{equation*}
    \norm{c(t)\varphi}_{L^1(\Sigma)}\le \norm{c(t)}_{L^2(\Sigma)}\norm{\varphi}_{L^2(\Sigma)}. 
\end{equation*}
Thus, we can integrate \eqref{neumann weak condition} to get the desired identity:
\begin{equation*}
    -\int_\Omega \varphi \, dx = -\int_\Omega \sigma\nabla v\cdot \nabla \varphi + \int_0^\infty \left(\int_\Sigma c(t)\varphi\, dS\right)\, dt = 
    -\int_\Omega \sigma\nabla v\cdot \nabla \varphi +  \int_\Sigma\left( \int_0^\infty   c(t)\, dt\right)\varphi\, dS.
\end{equation*}

\noindent {\bf When $(ii)$ holds.} 

Take a test function $\varphi_0$ satisfying
\begin{equation}\label{vaprhi_0}
\varphi_0\in H_\Sigma^1(\Omega),\quad  \restr{\varphi_0}{\Sigma}\equiv 1.    
\end{equation}
Then, \eqref{neumann weak condition} yields the following for almost every $t>0$:
\begin{equation}\label{c(t) equals something}
    c(t) = \frac{1}{|\Sigma|}\left(\langle\partial_t u(t), \varphi_0\rangle+ \int_\Omega \sigma\nabla u(t)\cdot\nabla \varphi_0\, dx    \right).
\end{equation}
In particular, if, by a slight abuse of notation, we regard $t\mapsto c(t)$ as a mapping $(0,\infty)\to \mathbb{R}$, then \eqref{c(t) equals something} implies that $c\in L^1\left((0,\infty), \mathbb{R}\right)$.  
Then, once again, integrating \eqref{neumann weak condition} we get the desired identity:
\begin{equation*}
    \int_\Omega -\varphi\, dx = -\int_\Omega \sigma \nabla v \cdot \nabla \varphi \, dx + \int_0^\infty c(t)\, dt \int_\Sigma \varphi \, dS = 
    -\int_\Omega \sigma \nabla v \cdot \nabla \varphi \, dx +  \int_\Sigma \left(\int_0^\infty c(t)\, dt\right)\varphi \, dS.
\end{equation*}

\noindent {\bf When $(iii)$ holds.} 

Without loss of generality, let $c(t)\ge 0$ almost everywhere on $\Sigma$ for almost all $t\in (0,\infty)$. 
Notice that, by taking $\varphi_0$ (defined as in \eqref{vaprhi_0}) as a test function in \eqref{neumann weak condition} and integrating on $(0,\infty)$, we obtain 
\begin{equation*}
-\int_\Omega \varphi_0\, dx + \int_\Omega \sigma \nabla v\cdot \nabla \varphi_0 = \int_0^\infty \left(\int_\Sigma c(t)\, dS\right) \, dt = \int_0^\infty \norm{c(t)}_{L^1(\Sigma)}\, dt.
\end{equation*}
It follows that $c\in L^1\left((0,\infty), L^1(\Sigma)\right)$, as $t\mapsto c(t)\in L^1(\Sigma)$ is measurable and its $L^1(\Sigma)$-norm is summable on $(0,\infty)$. In particular, $\int_0^\infty c(t)\, dt$ defines a measurable function on $\Sigma$, as claimed. 

Then, take an arbitrary test function $\varphi\in H_\Sigma^1(\Omega)$ and consider its positive and negative parts $\varphi_\pm\in H_\Sigma^1(\Omega)$. We can integrate \eqref{neumann weak condition} on $(0,\infty)$ and apply Tonelli's theorem (see \cite[Theorem 2.37, a.]{folland1999real_analysis}) to obtain the following:
\begin{equation}\label{some integral identity in case (iii)}
    \int_\Omega \varphi_\pm \, dx + \int_\Omega \sigma \nabla v \cdot \nabla \varphi_\pm \, dx = \int_0^\infty \left(\int_\Sigma c(t) \varphi_\pm \, dS\right) dt= \int_\Sigma \left( \int_0^\infty c(t)\, dt\right) \varphi_\pm \, dS.
\end{equation}
Also, notice that the integral on the right-hand side above must be finite, as the ones on the left-hand side are. Finally, since $\varphi=\varphi_+-\varphi_-$, we recover \eqref{weak neumann condition for v} by linearity.  
\end{proof}

\begin{remark}
Notice that, the assumptions $(i), (ii), (iii)$ are all satisfied if, in particular, $\Sigma\subset\partial \Omega$ is a sphere, $c(t)$ is a constant function for all $t$ and $u$ is the solution to the Dirichlet problem (that is, as in the setting of Section \ref{section Parabolic}).
\end{remark}

\section*{Data availability statement}
The paper has no associated data.

\section*{Acknowledgements}
Lorenzo Cavallina was partially supported by JSPS KAKENHI Grant Number JP22K13935 and JP21KK0044.

Giorgio Poggesi is supported by the Australian Research Council (ARC) Discovery Early Career Researcher Award (DECRA) DE230100954 “Partial Differential Equations: geometric aspects and applications” and is a member of the Australian Mathematical Society (AustMS).

\bibliographystyle{siam}
\bibliography{references}
\bigskip

\noindent
\textsc{Lorenzo Cavallina:}\\
\noindent
\textsc{
Mathematical Institute, Tohoku University, Aoba-ku, 
Sendai 980-8578, Japan}\\
\noindent
{\em Electronic mail address:}
cavallina.lorenzo.e6@tohoku.ac.jp

\bigskip

\noindent
\textsc{Giorgio Poggesi:}\\
\noindent
\textsc{
Discipline of Mathematical Sciences, The University of Adelaide, Adelaide SA 5005, Australia}
\\
\noindent
{\em Electronic mail address:}
giorgio.poggesi@adelaide.edu.au

%
%%%%%%%%%%%%%%%%%%%%%%%%%%%%%%%%%%%%%%%%%%%%%%%%%%%%%%%%
%\\
%\noindent
%\textsc{and}
%\\
%\textsc{
%Department of Mathematics and Statistics, The University of Western Australia, %35 Stirling Highway, Crawley, Perth, WA 6009, Australia} \\
%\noindent
%{\em Electronic mail address:}
%giorgio.poggesi@uwa.edu.au
%%%%%%%%%%%%%%%%%%%%%%%%%%%%%%%%%%%%%
%

\end{document}